\newtheorem{theorem}{Theorem}[section]
\newtheorem{lemma}[theorem]{Lemma}
\newtheorem{corollary}[theorem]{Corollary}
\theoremstyle{definition}
\newtheorem{remark}[theorem]{Remark}
\numberwithin{equation}{section}
\mathchardef\hyphen="2D
\begin{document}
\title{Injective norm of random tensors with independent entries}
\author{March T.~Boedihardjo}
\address{Department of Mathematics, Michigan State University, East Lansing, MI 48824}
\email{boedihar@msu.edu}
\begin{abstract}
We obtain a non-asymptotic bound for the expected injective norm of a random tensor with independent entries. This bound is similar to the bound by Bandeira and van Handel (2016) for the expected spectral norm of a random matrix with independent entries.
\end{abstract}
\keywords{Random tensor, Injective norm, Independent entry}
\subjclass[2020]{60B11}
\maketitle
\section{Introduction}\label{introsection}
Let $[d]=\{1,\ldots,d\}$ and $B_{2}^{d}=\{x\in\mathbb{R}^{d}:\,\|x\|_{2}\leq 1\}$. Let $\{e_{1},\ldots,e_{d}\}$ be the canonical basis for $\mathbb{R}^{d}$. For $d,r\in\mathbb{N}$, the tensor space $(\mathbb{R}^{d})^{\otimes r}=\underbrace{\mathbb{R}^{d}\otimes\ldots\otimes\mathbb{R}^{d}}_{r}$ consists of linear combinations of $x_{1}\otimes\ldots\otimes x_{r}$ where $x_{1},\ldots,x_{r}\in\mathbb{R}^{d}$. The canonical inner product on $(\mathbb{R}^{d})^{\otimes r}$ is defined by
\[\langle x_{1}\otimes\ldots\otimes x_{r},y_{1}\otimes\ldots\otimes y_{r}\rangle:=\langle x_{1},y_{1}\rangle\ldots\langle x_{r},y_{r}\rangle,\]
for $x_{1},\ldots,x_{r},y_{1},\ldots,y_{r}\in\mathbb{R}^{d}$. The injective norm of a given $Z\in(\mathbb{R}^{d})^{\otimes r}$ is defined by
\[\|Z\|_{\mathrm{inj}}:=\sup_{x_{1},\ldots,x_{r}\in B_{2}^{d}}\langle Z,x_{1}\otimes\ldots\otimes x_{r}\rangle.\]
When $r=2$, the injective norm of $Z$ coincides with the spectral norm of $Z$ as a matrix.

Let $g_{i_{1},\ldots,i_{r}}$, for $i_{1},\ldots,i_{r}\in[d]$, be independent standard Gaussian random variables. Let $b_{i_{1},\ldots,i_{r}}\in\mathbb{R}$, for $i_{1},\ldots,i_{r}\in[d]$, be fixed. In this paper, we study the injective norm of the following random tensor
\[Z=\sum_{i_{1},\ldots,i_{r}\in[d]}b_{i_{1},\ldots,i_{r}}g_{i_{1},\ldots,i_{r}}e_{i_{1}}\otimes\ldots\otimes e_{i_{r}}.\]
When $b_{i_{1},\ldots,i_{r}}=1$ for all $i_{1},\ldots,i_{r}\in[d]$, one can use some basic concentration bounds to show that $c\sqrt{d}\leq\mathbb{E}\|Z\|_{\mathrm{inj}}\leq C(r)\sqrt{d}$, where $c>0$ is a universal constant and $C(r)\geq 1$ depends only on $r$. It is known that $C(r)\leq C\sqrt{r\ln r}$ for some universal constant $C\geq 1$ \cite{TomSuz, NDT} and a more precise asymptotic behavior of $\|Z\|_{\mathrm{inj}}$ is described in \cite{DarMcK}.

When $r=2$, matching upper and lower bounds for the spectral norm $\mathbb{E}\|Z\|_{\mathrm{inj}}$ of the random matrix $Z$ are obtained in \cite{LatalaInv}. In particular, a prior result \cite{BvH} shows that
\begin{equation}\label{BvHineq}
\mathbb{E}\|Z\|_{\mathrm{inj}}\leq (1+\epsilon)\left[\max_{i_{1}\in[d]}\left(\sum_{i_{2}\in[d]}b_{i_{1},i_{2}}^{2}\right)^{\frac{1}{2}}+\max_{i_{2}\in[d]}\left(\sum_{i_{1}\in[d]}b_{i_{1},i_{2}}^{2}\right)^{\frac{1}{2}}+\frac{5\sqrt{\ln d}}{\sqrt{\ln(1+\epsilon)}}\max_{i_{1},i_{2}\in[d]}|b_{i_{1},i_{2}}|\right],
\end{equation}
for all $0<\epsilon\leq\frac{1}{2}$. Moreover, $\mathbb{E}\|Z\|_{\mathrm{inj}}$ is always greater than or equal to a universal constant times the sum of the first two terms on the right hand side of (\ref{BvHineq}). Unless the coefficients $b_{i_{1},\ldots,i_{r}}$ are very inhomogeneous, the last term $\sqrt{\ln d}\max_{i_{1},i_{2}}|b_{i_{1},i_{2}}|$ is usually dominated by the sum of the first two terms, and in this case, (\ref{BvHineq}) gives a sharp estimate of $\mathbb{E}\|Z\|_{\mathrm{inj}}$ up to a universal constant factor.

Therefore, it is natural to ask if a version of (\ref{BvHineq}) holds in the context of higher order tensors, i.e., when $r>2$. The proof of (\ref{BvHineq}) in \cite{BvH} uses the moment method technique where one uses the moments of a random matrix to bound its spectral norm. However, this technique is not known to be extendable to tensors in general. There is another proof of (\ref{BvHineq}) (but with different constant factors and with $\sqrt{\ln d}$ being replaced by $\ln d$) which avoids the moment method \cite[Theorem 4.1]{vHTrans}. This proof uses mostly geometric functional analysis techniques, in particular, the Slepian-Fernique inequality. However, it also uses the spectral decomposition of the matrix $(b_{i_{1},i_{2}}^{2})_{i_{1},i_{2}}$ and this is an obstacle for the proof to be extended to tensors.

In another direction, the following inequality is proved in \cite{LatalaRM} (this result was obtained a decade before the inequality (\ref{BvHineq}) was obtained in \cite{BvH})
\begin{equation}\label{Latineq}
\mathbb{E}\|Z\|_{\mathrm{inj}}\leq C\left[\max_{i_{1}\in[d]}\left(\sum_{i_{2}\in[d]}b_{i_{1},i_{2}}^{2}\right)^{\frac{1}{2}}+\max_{i_{2}\in[d]}\left(\sum_{i_{1}\in[d]}b_{i_{1},i_{2}}^{2}\right)^{\frac{1}{2}}+\left(\sum_{i_{1},i_{2}\in[d]}b_{i_{1},i_{2}}^{4}\right)^{\frac{1}{4}}\right].
\end{equation}
Although this bound does not always give a sharp estimate for $\mathbb{E}\|Z\|_{\mathrm{inj}}$ (e.g., when $b_{i_{1},i_{2}}=1$, for $i_{1}=i_{2}$, and $b_{i_{1},i_{2}}=0$ for $i_{1}\neq i_{2}$), the proof of (\ref{Latineq}) in \cite{LatalaRM} has the advantage that it uses only geometric functional analysis techniques without using moments or spectral decompositions, and thus, it has the potential to be extended to tensors. Indeed, recently, a version of (\ref{Latineq}) for tensors is proved in \cite[Theorem 2.1]{Bandeiratensor}, though the argument used in the proof resembles more the one used in \cite{LatalaGC}.

In this paper, we extend (\ref{BvHineq}) to tensors but with a worse constant factor and the $\sqrt{\ln d}$ being replaced by $(\ln d)^{2}$.
\begin{theorem}\label{main}
Let $d,r\in\mathbb{N}$. Suppose that $g_{i_{1},\ldots,i_{r}}$, for $i_{1},\ldots,i_{r}\in[d]$, are independent standard Gaussian random variables and $b_{i_{1},\ldots,i_{r}}\in\mathbb{R}$, for $i_{1},\ldots,i_{r}\in[d]$, are fixed. Let
\[Z=\sum_{i_{1},\ldots,i_{r}\in[d]}b_{i_{1},\ldots,i_{r}}g_{i_{1},\ldots,i_{r}}e_{i_{1}}\otimes\ldots\otimes e_{i_{r}}.\]
Then
\[\mathbb{E}\|Z\|_{\mathrm{inj}}\leq \sqrt{2r}\sum_{k\in[r]}\max_{i_{1},\ldots,i_{k-1},i_{k+1},\ldots,i_{r}\in[d]}\left(\sum_{i_{k}\in[d]}b_{i_{1},\ldots,i_{r}}^{2}\right)^{\frac{1}{2}}+Cr^{3}(\ln d)^{2}\max_{i_{1},\ldots,i_{r}\in[d]}|b_{i_{1},\ldots,i_{r}}|,\]
where $C\geq 1$ is a universal constant.
\end{theorem}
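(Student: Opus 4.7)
I would express $\|Z\|_{\mathrm{inj}}$ as the supremum of the centered Gaussian process
\[G(x^{(1)},\ldots,x^{(r)}) := \langle Z,\,x^{(1)}\otimes\cdots\otimes x^{(r)}\rangle\]
indexed by $(x^{(1)},\ldots,x^{(r)})\in(B_2^d)^r$, whose pointwise variance $\sigma^2(x)=\sum b_{i_1,\ldots,i_r}^2\prod_k(x^{(k)}_{i_k})^2$ is bounded above by $\min_k M_k^2$, where $M_k$ denotes the $k$-th weak-variance parameter appearing in the statement. The bound would then be obtained by combining a Slepian--Fernique style comparison (to account for the $\sqrt{2r}\sum_k M_k$ term) with a multi-scale net/chaining argument (to account for the $(\ln d)^2\max|b|$ term).

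For the weak-variance term, I would dominate the increments of $G$ (or of a suitable one-sided linearization of it obtained by fixing all factors but one) by those of an auxiliary Gaussian process of the form
\[H(x^{(1)},\ldots,x^{(r)})=\sum_{k=1}^{r}\langle\eta^{(k)},x^{(k)}\rangle,\]
where the $\eta^{(k)}$ are independent Gaussian vectors whose coordinate variances are chosen so that $\mathbb{E}\sup_{x\in B_2^d}\langle\eta^{(k)},x\rangle\asymp M_k$. A Slepian--Fernique comparison then yields $\mathbb{E}\sup G \leq \sum_k \mathbb{E}\sup\langle\eta^{(k)},x\rangle$ up to the stated $\sqrt{2r}$ constant.

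For the remainder term, I would dyadically decompose each factor $x^{(k)}\in B_2^d$ as $x^{(k)}=\sum_{\ell=0}^{L} x^{(k)}_{\ell}$ according to the magnitude of its coordinates, so that $x^{(k)}_\ell$ has sparsity tied to the dyadic level, $\|x^{(k)}_\ell\|_\infty \leq 2^{-\ell}$, and only $L=O(\ln d)$ levels are non-trivial. By $r$-linearity,
\[G(x^{(1)},\ldots,x^{(r)})=\sum_{\ell_1,\ldots,\ell_r=0}^{L}G\bigl(x^{(1)}_{\ell_1},\ldots,x^{(r)}_{\ell_r}\bigr),\]
where for each multi-index $(\ell_1,\ldots,\ell_r)$ the supremum ranges over a product of polynomial-size (in $d$) nets of sparse vectors. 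The variance of $G$ at a sparse test point can be bounded in two complementary ways: via $\sigma^2\leq \min_k M_k^2$ (useful when some factor is at the ``large'' level $\ell_k=0$) and via $\sigma^2\leq(\max|b|)^2\prod_k\|x^{(k)}_{\ell_k}\|_2^2$ (useful when all factors are ``small''). A Gaussian maximal inequality on each product net then produces the $(\ln d)^2\max|b|$ term.

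I expect the main obstacle to be the combinatorial bookkeeping required to sum over the $(L+1)^r=O((\ln d)^r)$ multi-indices and collapse the result to $(\ln d)^2\max|b|$ with only a polynomial (namely $r^3$) $r$-dependence. A naive union bound would give an unacceptable $(\ln d)^r\max|b|$; the key is to show that at most two of the $r$ chaining factors ever demand a simultaneous ``entropic'' $\sqrt{\ln d}$ contribution, while the remaining factors are absorbed into either the weak-variance bound or a purely polynomial $r$-factor. Carrying out this two-versus-many split -- essentially identifying which pair of modes carries the entropy and arguing that the other modes may be controlled by $\min_k M_k$ -- is the technical heart of the argument.
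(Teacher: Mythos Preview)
Your high-level split into a Slepian--Fernique piece and an entropy piece matches the paper, but both pieces as you describe them have real gaps, and the paper resolves them by different mechanisms than the ones you propose.

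For the comparison step: the increments of $G$ are \emph{not} dominated by those of any process $H(x)=\sum_k\langle\eta^{(k)},x^{(k)}\rangle$ with fixed coordinate variances. After telescoping one gets $(\mathbb{E}|G(x)-G(y)|^2)^{1/2}\leq\sum_k\|D^{(k)}_{y_1,\ldots,y_{k-1},x_{k+1},\ldots,x_r}(x^{(k)}-y^{(k)})\|_2$, where each diagonal matrix $D^{(k)}$ depends on the \emph{other} factors; no fixed $\eta^{(k)}$ absorbs this, so a direct Slepian--Fernique comparison with your $H$ is not available. The paper does not attempt one. Instead it proves a hybrid lemma: if $\mathbb{E}(Z_t-Z_s)^2\leq\mathbb{E}(W_t-W_s)^2+\rho(t,s)^2$ then $\mathbb{E}\sup Z\leq\mathbb{E}\sup W+C\gamma_2(T,\rho)$, established by dominating $\rho$ by an ultrametric, embedding that ultrametric isometrically into Hilbert space, and only then applying ordinary Slepian--Fernique. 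The process $W$ is built from the $D^{(k)}$, and the residual metric $\rho$ is (up to an $r$ factor) $\eta^{(k)}(x,y)=\|\psi_k(x)-\psi_k(y)\|_\infty$ with $\psi_k(x)=\bigl((\sum_{i_k}b_{i_1,\ldots,i_r}^2 x_{i_k}^2)^{1/2}\bigr)_{i_1,\ldots,i_{k-1},i_{k+1},\ldots,i_r}$, which is exactly what controls the variation of $D^{(k)}$ in its arguments.

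For the remainder: the paper uses no dyadic sparse decomposition of the $x^{(k)}$, and the $(\ln d)^2$ does not arise from a ``two modes carry the entropy'' phenomenon. It comes from bounding the Dudley integral of $\eta^{(k)}$ via Maurey's empirical method. The range of $x\mapsto\psi_k(x)^2$ (entrywise square) is the convex hull of at most $d+1$ points in $[0,b^2]^{d^{r-1}}$, and a Maurey argument combined with a square-root Bernstein inequality gives $N(B_2^d,\eta^{(k)},\epsilon)\leq\exp\bigl(Cr(\ln d)^2 b^2/\epsilon^2\bigr)$. One $\ln d$ enters through $\sqrt{\ln N}\asymp\sqrt{r}(\ln d)b/\epsilon$, the second from integrating $1/\epsilon$ over $[b/d,2b]$; all $r$-dependence is polynomial and falls out of the bookkeeping. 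Your sparse-level scheme, by contrast, genuinely produces $(L+1)^r\asymp(\ln d)^r$ pieces, and the ``two-versus-many'' collapse you sketch is precisely the missing step: nothing in the problem singles out two modes among the $r$ symmetric ones, and the bound $\sigma^2\leq\min_k M_k^2$ you hope will absorb the remaining modes does not interact with the level sums in a way that prevents the $(\ln d)^r$ blowup.
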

\begin{remark}\label{lowerbound}
We also have the following lower bound
\[(\mathbb{E}\|Z\|_{\mathrm{inj}}^{2})^{\frac{1}{2}}\geq \max_{k\in[r]}\max_{i_{1},\ldots,i_{k-1},i_{k+1},\ldots,i_{r}\in[d]}\left(\sum_{i_{k}\in[d]}b_{i_{1},\ldots,i_{r}}^{2}\right)^{\frac{1}{2}}.\]
Indeed, if we fix $k\in[r]$ and $j_{1},\ldots,j_{k-1},j_{k+1},\ldots,j_{r}\in[d]$, we have
\begin{eqnarray*}
\|Z\|_{\mathrm{inj}}&\geq&\sup_{x\in B_{2}^{d}}\langle Z,e_{j_{1}}\otimes\ldots\otimes e_{j_{k-1}}\otimes x\otimes e_{j_{k+1}}\otimes\ldots\otimes e_{j_{r}}\rangle\\&=&
\sup_{x\in B_{2}^{d}}\sum_{j_{k}\in[d]}b_{j_{1},\ldots,j_{r}}g_{j_{1},\ldots,j_{r}}\langle x,e_{j_{k}}\rangle=\left\|\sum_{j_{k}\in[d]}b_{j_{1},\ldots,j_{r}}g_{j_{1},\ldots,j_{r}}e_{j_{k}}\right\|_{2}.
\end{eqnarray*}
Thus the above lower bound follows.
\end{remark}
\begin{remark}
Since the map $(g_{i_{1},\ldots,i_{r}})_{i_{1},\ldots,i_{r}}\mapsto\|Z\|_{\mathrm{inj}}$ is Lipschitz with respect to the Euclidean norm and the Lipschitz constant is $b:=\max_{i_{1},\ldots,i_{r}\in[d]}|b_{i_{1},\ldots,i_{r}}|$, by Gaussian concentration \cite[Equation (2.35)]{Ledoux}, we have
\[\mathbb{P}(|\|Z\|_{\mathrm{inj}}-\mathbb{E}\|Z\|_{\mathrm{inj}}|\geq t)\leq 2e^{-t^{2}/(2b^{2})},\quad t\geq 0.\]
\end{remark}
\begin{corollary}\label{maincorollary}
Let $d,r\in\mathbb{N}$ and $K>0$. Suppose that $X_{i_{1},\ldots,i_{r}}$, for $i_{1},\ldots,i_{r}\in[d]$, are independent random variables taking values in $[-K,K]$ and have mean $0$. Let
\[X=\sum_{i_{1},\ldots,i_{r}\in[d]}X_{i_{1},\ldots,i_{r}}e_{i_{1}}\otimes\ldots\otimes e_{i_{r}}.\]
Then
\[\mathbb{E}\|X\|_{\mathrm{inj}}\leq 4\sqrt{r}\sum_{k\in[r]}\max_{i_{1},\ldots,i_{k-1},i_{k+1},\ldots,i_{r}\in[d]}\left(\sum_{i_{k}\in[d]}\mathbb{E}X_{i_{1},\ldots,i_{r}}^{2}\right)^{\frac{1}{2}}+Cr^{3}(\ln d)^{2}K,\]
\[(\mathbb{E}\|X\|_{\mathrm{inj}}^{2})^{\frac{1}{2}}\geq \max_{k\in[r]}\max_{i_{1},\ldots,i_{k-1},i_{k+1},\ldots,i_{r}\in[d]}\left(\sum_{i_{k}\in[d]}\mathbb{E}X_{i_{1},\ldots,i_{r}}^{2}\right)^{\frac{1}{2}},\]
and
\[\mathbb{P}(|\|X\|_{\mathrm{inj}}-\mathbb{E}\|X\|_{\mathrm{inj}}|\geq t)\leq Ce^{-ct^{2}/K^{2}},\]
for all $t\geq 0$, where $C,c>0$ are universal constants.
\end{corollary}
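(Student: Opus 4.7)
The plan is to reduce the corollary to Theorem~\ref{main} via symmetrization and Gaussian replacement, and then to control the resulting maxima of empirical $\ell^{2}$-norms by a Bernstein-type concentration estimate. Throughout, write $\tau_{k}:=\max_{i_{1},\ldots,i_{k-1},i_{k+1},\ldots,i_{r}\in[d]}\bigl(\sum_{i_{k}\in[d]}\mathbb{E}X_{i_{1},\ldots,i_{r}}^{2}\bigr)^{1/2}$.

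First, by the standard symmetrization inequality for sums of independent mean-zero random variables, introducing i.i.d.~Rademacher variables $(\epsilon_{i_{1},\ldots,i_{r}})$ independent of $X$,
\[\mathbb{E}\|X\|_{\mathrm{inj}}\leq 2\,\mathbb{E}\left\|\sum_{i_{1},\ldots,i_{r}\in[d]}\epsilon_{i_{1},\ldots,i_{r}}X_{i_{1},\ldots,i_{r}}\,e_{i_{1}}\otimes\ldots\otimes e_{i_{r}}\right\|_{\mathrm{inj}}.\]
Conditioning on $X$ and using the Rademacher-to-Gaussian comparison $\mathbb{E}_{\epsilon}\|\sum\epsilon_{i}v_{i}\|\leq\sqrt{\pi/2}\,\mathbb{E}_{g}\|\sum g_{i}v_{i}\|$ (immediate from $g_{i}=|g_{i}|\mathrm{sign}(g_{i})$ and Jensen in $|g_{i}|$ applied to the convex map $y\mapsto\mathbb{E}_{\epsilon}\|\sum\epsilon_{i}y_{i}v_{i}\|$), I get
\[\mathbb{E}\|X\|_{\mathrm{inj}}\leq\sqrt{2\pi}\,\mathbb{E}_{X,g}\left\|\sum_{i_{1},\ldots,i_{r}\in[d]}g_{i_{1},\ldots,i_{r}}X_{i_{1},\ldots,i_{r}}\,e_{i_{1}}\otimes\ldots\otimes e_{i_{r}}\right\|_{\mathrm{inj}},\]
where $(g_{i_{1},\ldots,i_{r}})$ are i.i.d.~standard Gaussians independent of $X$. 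Applying Theorem~\ref{main} to the inner expectation (conditional on $X$, with $b_{i_{1},\ldots,i_{r}}=X_{i_{1},\ldots,i_{r}}$) and using $\max|X_{i_{1},\ldots,i_{r}}|\leq K$ a.s., this becomes
\[\mathbb{E}\|X\|_{\mathrm{inj}}\leq 2\sqrt{\pi r}\sum_{k\in[r]}\mathbb{E}_{X}\max_{i_{1},\ldots,i_{k-1},i_{k+1},\ldots,i_{r}}\left(\sum_{i_{k}\in[d]}X_{i_{1},\ldots,i_{r}}^{2}\right)^{1/2}+\sqrt{2\pi}\,Cr^{3}(\ln d)^{2}K.\]

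The main obstacle is to bound $\mathbb{E}_{X}\max(\sum X^{2})^{1/2}$ by a factor close to $1$ times $\tau_{k}$, because the margin provided by $2\sqrt{\pi}<4$ is narrow. I would apply Bernstein's inequality to each sum $S:=\sum_{i_{k}}X_{i_{1},\ldots,i_{r}}^{2}$ (non-negative summands bounded by $K^{2}$ with total variance at most $K^{2}\tau_{k}^{2}$) together with a union bound over the $d^{r-1}$ choices of the fixed indices, giving $\mathbb{E}_{X}\max|S-\mathbb{E}S|\leq C_{1}K\tau_{k}\sqrt{r\ln d}+C_{1}K^{2}r\ln d$. Combining this with $\max\sqrt{S}=\sqrt{\max S}\leq\sqrt{\tau_{k}^{2}+\max|S-\mathbb{E}S|}\leq\tau_{k}+\sqrt{\max|S-\mathbb{E}S|}$, the concavity bound $\mathbb{E}\sqrt{\cdot}\leq\sqrt{\mathbb{E}\cdot}$, the inequality $\sqrt{a+b}\leq\sqrt{a}+\sqrt{b}$, and the weighted AM--GM estimate $\sqrt{K\tau_{k}\sqrt{r\ln d}}\leq\eta\tau_{k}+(4\eta)^{-1}K\sqrt{r\ln d}$, I obtain
\[\mathbb{E}_{X}\max\left(\sum X^{2}\right)^{1/2}\leq\bigl(1+\sqrt{C_{1}}\,\eta\bigr)\tau_{k}+C_{2}(\eta)K\sqrt{r\ln d}.\]
Choosing $\eta$ small enough that $2\sqrt{\pi}(1+\sqrt{C_{1}}\,\eta)\leq 4$ (possible since $2\sqrt{\pi}<4$) produces the leading term $4\sqrt{r}\sum_{k}\tau_{k}$; the accumulated $K\sqrt{r\ln d}$ contribution is of order $r^{5/2}K\sqrt{\ln d}$, which is dominated by the existing $Cr^{3}(\ln d)^{2}K$ (since $r^{5/2}\sqrt{\ln d}\leq r^{3}(\ln d)^{2}$).

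The lower bound follows by the testing argument of Remark~\ref{lowerbound} applied to $X$: fixing $k$ and $j_{1},\ldots,j_{k-1},j_{k+1},\ldots,j_{r}\in[d]$ and optimizing over $x\in B_{2}^{d}$ gives $\|X\|_{\mathrm{inj}}\geq(\sum_{j_{k}}X_{j_{1},\ldots,j_{r}}^{2})^{1/2}$, so $\mathbb{E}\|X\|_{\mathrm{inj}}^{2}\geq\sum_{j_{k}}\mathbb{E}X_{j_{1},\ldots,j_{r}}^{2}$, and maximizing over $k$ and the fixed indices yields the claim. Finally, the sub-Gaussian tail follows from Talagrand's convex-distance concentration inequality for independent random variables in $[-K,K]$, applied to the mapping $X\mapsto\|X\|_{\mathrm{inj}}$, which is convex and $1$-Lipschitz with respect to the Euclidean norm on $(X_{i_{1},\ldots,i_{r}})_{i_{1},\ldots,i_{r}\in[d]}$.
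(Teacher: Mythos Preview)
Your proof is correct and follows essentially the same route as the paper: Gaussian symmetrization, conditional application of Theorem~\ref{main}, control of the expected maxima $\mathbb{E}_{X}\max\bigl(\sum_{i_{k}}X_{i_{1},\ldots,i_{r}}^{2}\bigr)^{1/2}$, and then the identical arguments for the lower bound (Remark~\ref{lowerbound}) and the concentration (Talagrand). The only difference is in the third step: the paper controls the maxima via a Rosenthal-type moment bound at level $p=\lceil 10r\ln d\rceil$ together with $\mathbb{E}\max\leq(\sum\mathbb{E}(\cdot)^{p})^{1/p}$, whereas you use Bernstein plus a union bound and then the square-root/AM--GM splitting to peel off a $(1+\sqrt{C_{1}}\,\eta)\tau_{k}$ main term. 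Both devices are standard and yield the leading constant $4$ (since $2\sqrt{\pi}<4$), with the residual terms absorbed into $Cr^{3}(\ln d)^{2}K$; your Bernstein route is arguably more elementary in that it avoids citing the sharp-constant Rosenthal inequality.
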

\begin{proof}
Let $g_{i_{1},\ldots,i_{r}}$, for $i_{1},\ldots,i_{r}\in[d]$, be independent standard Gaussian random variables that are independent of all the $X_{i_{1},\ldots,i_{r}}$. By Gaussian symmetrization \cite[Lemma 7.4]{vHnotes}, we have
\[\mathbb{E}\|X\|_{\mathrm{inj}}\leq\sqrt{2\pi}\mathbb{E}\left\|\sum_{i_{1},\ldots,i_{r}\in[d]}g_{i_{1},\ldots,i_{r}}X_{i_{1},\ldots,i_{r}}e_{i_{1}}\otimes\ldots\otimes e_{i_{r}}\right\|.\]
So by Theorem \ref{main},
\begin{equation}\label{corollaryeq1}
\mathbb{E}\|X\|_{\mathrm{inj}}\leq \sqrt{2\pi}\left[\sqrt{2r}\sum_{k\in[r]}\mathbb{E}\max_{i_{1},\ldots,i_{k-1},i_{k+1},\ldots,i_{r}\in[d]}\left(\sum_{i_{k}\in[d]}X_{i_{1},\ldots,i_{r}}^{2}\right)^{\frac{1}{2}}+Cr^{3}(\ln d)^{2}K\right].
\end{equation}
By the Rosenthal-type inequality \cite[Theorem 8]{Rosenthal}, for fixed $k\in[r]$ and $i_{1},\ldots,i_{k-1},i_{k+1},\ldots,i_{r}\in[d]$, we have
\[\left[\mathbb{E}\left(\sum_{i_{k}\in[d]}X_{i_{1},\ldots,i_{r}}^{2}\right)^{\lceil 10r\ln d\rceil}\right]^{\frac{1}{\lceil 10r\ln d\rceil}}\leq 1.1\sum_{i_{k}\in[d]}\mathbb{E}X_{i_{1},\ldots,i_{r}}^{2}+8\lceil 10r\ln d\rceil K^{2}.\]
So
\begin{align*}
&\mathbb{E}\max_{i_{1},\ldots,i_{k-1},i_{k+1},\ldots,i_{r}\in[d]}\sum_{i_{k}\in[d]}X_{i_{1},\ldots,i_{r}}^{2}\\\leq&
\left[\sum_{i_{1},\ldots,i_{k-1},i_{k+1},\ldots,i_{r}\in[d]}\mathbb{E}\left(\sum_{i_{k}\in[d]}X_{i_{1},\ldots,i_{r}}^{2}\right)^{\lceil 10r\ln d\rceil}\right]^{\frac{1}{\lceil 10r\ln d\rceil}}\\\leq&
(d^{r-1})^{\frac{1}{\lceil 10r\ln d\rceil}}\left(1.1\max_{i_{1},\ldots,i_{k-1},i_{k+1},\ldots,i_{r}\in[d]}\sum_{i_{k}\in[d]}\mathbb{E}X_{i_{1},\ldots,i_{r}}^{2}+8\lceil 10r\ln d\rceil K^{2}\right).
\end{align*}
Since $(d^{r-1})^{\frac{1}{\lceil 10r\ln d\rceil}}\leq e^{0.1}$, by taking square root above and by (\ref{corollaryeq1}), the upper bound for $\mathbb{E}\|X\|_{\mathrm{inj}}$ follows. The lower bound for $(\mathbb{E}\|X\|_{\mathrm{inj}}^{2})^{\frac{1}{2}}$ follows by applying the argument in Remark \ref{lowerbound}. The concentration bound for $\|X\|_{\mathrm{inj}}$ follows from Talagrand's concentration inequality \cite[Corollary 4.10]{Ledoux} and the facts that $X_{i_{1},\ldots,i_{r}}$ takes values in $[-K,K]$ and the map $(X_{i_{1},\ldots,i_{r}})_{i_{1},\ldots,i_{r}\in[d]}\mapsto\|X\|_{\mathrm{inj}}$ is convex and $1$-Lipschitz with respect to the Euclidean norm.
\end{proof}
The following result follows immediately from Corollary \ref{maincorollary}. This removes the $(\ln d)^{r-2}$ factor in the estimate in \cite{ZZ} at the cost of an extra additive term $Cr^{3}(\ln d)^{2}$.
\begin{corollary}\label{hypergraph}
Let $d,r\in\mathbb{N}$. Suppose that $X_{i_{1},\ldots,i_{r}}$, for $i_{1},\ldots,i_{r}\in[d]$, are independent Bernoulli random variables. Let $p_{i_{1},\ldots,i_{r}}=\mathbb{P}(X_{i_{1},\ldots,i_{r}}=1)$. Let
\[X=\sum_{i_{1},\ldots,i_{r}\in[d]}X_{i_{1},\ldots,i_{r}}e_{i_{1}}\otimes\ldots\otimes e_{i_{r}}.\]
Then
\[\mathbb{E}\|X-\mathbb{E}X\|_{\mathrm{inj}}\leq 4\sqrt{r}\sum_{k\in[r]}\max_{i_{1},\ldots,i_{k-1},i_{k+1},\ldots,i_{r}\in[d]}\left(\sum_{i_{k}\in[d]}(p_{i_{1},\ldots,i_{r}}-p_{i_{1},\ldots,i_{r}}^{2})\right)^{\frac{1}{2}}+Cr^{3}(\ln d)^{2},\]
\[(\mathbb{E}\|X-\mathbb{E}X\|_{\mathrm{inj}}^{2})^{\frac{1}{2}}\geq \max_{k\in[r]}\max_{i_{1},\ldots,i_{k-1},i_{k+1},\ldots,i_{r}\in[d]}\left(\sum_{i_{k}\in[d]}(p_{i_{1},\ldots,i_{r}}-p_{i_{1},\ldots,i_{r}}^{2})\right)^{\frac{1}{2}},\]
and
\[\mathbb{P}(|\|X-\mathbb{E}X\|_{\mathrm{inj}}-\mathbb{E}\|X-\mathbb{E}X\|_{\mathrm{inj}}|\geq t)\leq Ce^{-ct^{2}},\]
for all $t\geq 0$, where $C,c>0$ are universal constants.
\end{corollary}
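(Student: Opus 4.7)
The plan is to apply Corollary \ref{maincorollary} to the centered variables
\[Y_{i_{1},\ldots,i_{r}}:=X_{i_{1},\ldots,i_{r}}-p_{i_{1},\ldots,i_{r}},\qquad i_{1},\ldots,i_{r}\in[d].\]
First I would verify the hypotheses. The $Y_{i_{1},\ldots,i_{r}}$ are independent (as functions of the independent $X_{i_{1},\ldots,i_{r}}$), have mean $0$, and since $X_{i_{1},\ldots,i_{r}}\in\{0,1\}$ and $p_{i_{1},\ldots,i_{r}}\in[0,1]$, they take values in $[-1,1]$. So $Y_{i_{1},\ldots,i_{r}}$ satisfies the hypotheses of Corollary \ref{maincorollary} with $K=1$.

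Next I would identify the relevant tensor. By definition of $\mathbb{E}X$, we have
\[X-\mathbb{E}X=\sum_{i_{1},\ldots,i_{r}\in[d]}Y_{i_{1},\ldots,i_{r}}e_{i_{1}}\otimes\ldots\otimes e_{i_{r}},\]
and by a standard computation for Bernoulli variables,
\[\mathbb{E}Y_{i_{1},\ldots,i_{r}}^{2}=p_{i_{1},\ldots,i_{r}}(1-p_{i_{1},\ldots,i_{r}})=p_{i_{1},\ldots,i_{r}}-p_{i_{1},\ldots,i_{r}}^{2}.\]

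With these identifications, the three bounds of Corollary \ref{maincorollary} applied to $Y_{i_{1},\ldots,i_{r}}$ (with $K=1$) translate verbatim into the three bounds of Corollary \ref{hypergraph}: the upper bound on the expectation, the lower bound on the second moment (which is also stated in Remark \ref{lowerbound}), and the Talagrand-type concentration bound for the $1$-Lipschitz, convex functional $(Y_{i_{1},\ldots,i_{r}})\mapsto\|X-\mathbb{E}X\|_{\mathrm{inj}}$.

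There is no real obstacle here, as this corollary is a direct specialization; the only thing to take care of is bookkeeping the constants, namely confirming $K=1$ and that the variance formula for a Bernoulli variable matches the quantity $p_{i_{1},\ldots,i_{r}}-p_{i_{1},\ldots,i_{r}}^{2}$ appearing in the statement.
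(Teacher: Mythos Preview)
Your proposal is correct and matches the paper's approach exactly: the paper states that Corollary~\ref{hypergraph} ``follows immediately from Corollary~\ref{maincorollary},'' and your argument---centering the Bernoulli entries, noting $K=1$, and identifying $\mathbb{E}Y_{i_{1},\ldots,i_{r}}^{2}=p_{i_{1},\ldots,i_{r}}-p_{i_{1},\ldots,i_{r}}^{2}$---is precisely that immediate deduction.
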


We end this section with some definitions. A {\it metric} $\rho$ on a set $T$ is a map $\rho:T\times T\to[0,\infty)$ such that $\rho(x,x)=0$, $\rho(x,y)=\rho(y,x)$ and $\rho(x,z)\leq\rho(x,y)+\rho(y,z)$ for all $x,y,z\in T$. The standard condition that $\rho(x,y)=0$ only when $x=y$ is not required. If $(T,\rho)$ is a metric space and $\epsilon>0$, then the {\it covering number} $N(T,\rho,\epsilon)$ is the smallest possible size of an {\it $\epsilon$-cover} of $T$, i.e., a set $S\subset T$ for which every element of $T$ has distance at most $\epsilon$ from an element of $S$.

If $D$ is a $d\times d$ diagonal matrix, then $D_{i,i}$ denotes the $(i,i)$ entry of $D$ and
\[\|D\|_{\infty}:=\max_{i\in[d]}|D_{i,i}|.\]

In the rest of this paper, we prove the main result Theorem \ref{main}. In Section \ref{3section}, we construct 3 technical objects and obtain some properties. In Section \ref{proofsection}, using these properties, we prove Theorem \ref{main}.

\section{Three technical objects}\label{3section}
In this section, we fix $b_{i_{1},\ldots,i_{r}}\in\mathbb{R}$, for $i_{1},\ldots,i_{r}\in[d]$, and we construct 3 technical objects based on these fixed $b_{i_{1},\ldots,i_{r}}$, namely, (1) the multilinear map $\tau$, (2) the $d\times d$ diagonal matrix $D_{x_{1},\ldots,x_{k-1},x_{k+1},\ldots,x_{r}}^{(k)}$ for given $k\in[r]$ and $x_{1},\ldots,x_{k-1},x_{k+1},\ldots,x_{r}\in\mathbb{R}^{d}$, and (3) the metric $\eta^{(k)}$ on $B_{2}^{d}$ for given $k\in[r]$. The properties of these 3 objects are summarized at the end of this section and they will be used to prove Theorem \ref{main} in the next section.
\subsection{Multilinear map and diagonal matrices}
Define the multilinear map\\
$\tau:\underbrace{\mathbb{R}^{d}\times\ldots\times\mathbb{R}^{d}}_{r}\to\mathbb{R}^{d^{r}}$ by
\[\tau(x_{1},\ldots,x_{r}):=(b_{i_{1},\ldots,i_{r}}\langle x_{1},e_{i_{1}}\rangle\ldots\langle x_{r},e_{i_{r}}\rangle)_{i_{1},\ldots,i_{r}\in[d]},\]
for $x_{1},\ldots,x_{r}\in\mathbb{R}^{d}$. Observe that
\begin{equation}\label{taunorm}
\|\tau(x_{1},\ldots,x_{r})\|_{2}=\left(\sum_{i_{1},\ldots,i_{r}\in[d]}b_{i_{1},\ldots,i_{r}}^{2}\langle x_{1},e_{i_{1}}\rangle^{2}\ldots\langle x_{r},e_{i_{r}}\rangle^{2}\right)^{\frac{1}{2}}.
\end{equation}
Thus for every $k\in[r]$, we have
\begin{eqnarray}\label{taud}
\|\tau(x_{1},\ldots,x_{r})\|_{2}&=&
\left(\sum_{i_{k}\in[d]}(D_{x_{1},\ldots,x_{k-1},x_{k+1},\ldots,x_{r}}^{(k)})_{i_{k},i_{k}}^{2}\langle x_{k},e_{i_{k}}\rangle^{2}\right)^{\frac{1}{2}}\\&=&
\|D_{x_{1},\ldots,x_{k-1},x_{k+1},\ldots,x_{r}}^{(k)}x_{k}\|_{2},\nonumber
\end{eqnarray}
where $D_{x_{1},\ldots,x_{k-1},x_{k+1},\ldots,x_{r}}^{(k)}$ is defined to be the $d\times d$ diagonal matrix whose $(i_{k},i_{k})$-entry is given by
\begin{align*}
&(D_{x_{1},\ldots,x_{k-1},x_{k+1},\ldots,x_{r}}^{(k)})_{i_{k},i_{k}}\\:=&
\left(\sum_{i_{1},\ldots,i_{k-1},i_{k+1},\ldots,i_{r}\in[d]}b_{i_{1},\ldots,i_{r}}^{2}\langle x_{1},e_{i_{1}}\rangle^{2}\ldots\langle x_{k-1},e_{i_{k-1}}\rangle^{2}\langle x_{k+1},e_{i_{k+1}}\rangle^{2}\ldots\langle x_{r},e_{i_{r}}\rangle^{2}\right)^{\frac{1}{2}},
\end{align*}
for $i_{k}\in[d]$.

Let us do some examples. When $r=3$ and $k=2$, we have
\begin{eqnarray*}
\|\tau(x_{1},x_{2},x_{3})\|_{2}&=&
\left(\sum_{i_{1},i_{2},i_{3}\in[d]}b_{i_{1},i_{2},i_{3}}^{2}\langle x_{1},e_{i_{1}}\rangle^{2}\langle x_{2},e_{i_{2}}\rangle^{2}\langle x_{3},e_{i_{3}}\rangle^{2}\right)^{\frac{1}{2}}\\&=&
\left(\sum_{i_{2}\in[d]}\left(\sum_{i_{1},i_{3}\in[d]}b_{i_{1},i_{2},i_{3}}^{2}\langle x_{1},e_{i_{1}}\rangle^{2}\langle x_{3},e_{i_{3}}\rangle^{2}\right)\langle x_{2},e_{i_{2}}\rangle^{2}\right)^{\frac{1}{2}}\\&=&
\left(\sum_{i_{2}\in[d]}(D_{x_{1},x_{3}}^{(2)})_{i_{2},i_{2}}^{2}\langle x_{2},e_{i_{2}}\rangle^{2}\right)^{\frac{1}{2}}=\|D_{x_{1},x_{3}}^{(2)}x_{2}\|_{2},
\end{eqnarray*}
for all $x_{1},x_{2},x_{3}\in\mathbb{R}^{d}$, where $D_{x_{1},x_{3}}^{(2)}$ is the $d\times d$ diagonal matrix whose $(i_{2},i_{2})$-entry is given by
\[(D_{x_{1},x_{3}}^{(2)})_{i_{2},i_{2}}=\left(\sum_{i_{1},i_{3}\in[d]}b_{i_{1},i_{2},i_{3}}^{2}\langle x_{1},e_{i_{1}}\rangle^{2}\langle x_{3},e_{i_{3}}\rangle^{2}\right)^{\frac{1}{2}},\]
for $i_{2}\in[d]$. Similarly, we also have $\|\tau(x_{1},x_{2},x_{3})\|_{2}=\|D_{x_{1},x_{2}}^{(3)}x_{3}\|_{2}$ for all $x_{1},x_{2},x_{3}\in\mathbb{R}^{d}$, where $D_{x_{1},x_{2}}^{(3)}$ is the $d\times d$ diagonal matrix with
\[(D_{x_{1},x_{2}}^{(3)})_{i_{3},i_{3}}=\left(\sum_{i_{1},i_{2}\in[d]}b_{i_{1},i_{2},i_{3}}^{2}\langle x_{1},e_{i_{1}}\rangle^{2}\langle x_{2},e_{i_{2}}\rangle^{2}\right)^{\frac{1}{2}},\]
for $i_{3}\in[d]$.
\subsection{The secondary metrics}\label{secondmetric}
For $k\in[r]$, define $\psi_{k}:B_{2}^{d}\to\mathbb{R}^{d^{r-1}}$ by
\[\psi_{k}(x):=\left(\sqrt{\sum_{i_{k}\in[d]}b_{i_{1},\ldots,i_{r}}^{2}\langle x,e_{i_{k}}\rangle^{2}}\right)_{i_{1},\ldots,i_{k-1},i_{k+1},\ldots,i_{r}\in[d]},\]
for $x\in B_{2}^{d}$. For $k\in[r]$, define the metric $\eta^{(k)}$ on $B_{2}^{d}$ by
\[\eta^{(k)}(x,y):=\|\psi_{k}(x)-\psi_{k}(y)\|_{\infty},\]
for $x,y\in B_{2}^{d}$.
\pagebreak

The following result establishes a relation between the multilinear map $\tau$, introduced in the previous subsection, and the metrics $\eta^{(1)},\ldots,\eta^{(r)}$.
\begin{lemma}\label{taueta}
For all $x_{1},\ldots,x_{r},y_{1},\ldots,y_{r}\in B_{2}^{d}$, we have
\[\|\tau(x_{1},\ldots,x_{r})\|_{2}-\|\tau(y_{1},\ldots,y_{r})\|_{2}\leq\sum_{k=1}^{r}\eta^{(k)}(x_{k},y_{k}).\]
\end{lemma}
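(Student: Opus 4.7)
The plan is a telescoping argument in which only one coordinate is changed at each step. Interpolating between $(y_1,\ldots,y_r)$ and $(x_1,\ldots,x_r)$ one entry at a time gives
\[\|\tau(x_1,\ldots,x_r)\|_2-\|\tau(y_1,\ldots,y_r)\|_2=\sum_{k=1}^{r}\bigl(\|\tau(x_1,\ldots,x_k,y_{k+1},\ldots,y_r)\|_2-\|\tau(x_1,\ldots,x_{k-1},y_k,y_{k+1},\ldots,y_r)\|_2\bigr),\]
so it suffices to prove the one-coordinate bound: for every $k\in[r]$ and every $v_1,\ldots,v_{k-1},v_{k+1},\ldots,v_r\in B_2^d$,
\[\|\tau(v_1,\ldots,v_{k-1},x_k,v_{k+1},\ldots,v_r)\|_2-\|\tau(v_1,\ldots,v_{k-1},y_k,v_{k+1},\ldots,v_r)\|_2\leq\eta^{(k)}(x_k,y_k).\]

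For this one-coordinate step I would regroup the sum in (\ref{taunorm}) by pulling out the inner sum over $i_k$: this inner sum is precisely $[\psi_k(x_k)]^2_{i_1,\ldots,i_{k-1},i_{k+1},\ldots,i_r}$, so $\|\tau(v_1,\ldots,v_{k-1},x_k,v_{k+1},\ldots,v_r)\|_2$ equals the $\ell_2$-norm, over the indices $(i_1,\ldots,i_{k-1},i_{k+1},\ldots,i_r)$, of the vector with entries $[\psi_k(x_k)]_{i_1,\ldots,i_{k-1},i_{k+1},\ldots,i_r}\prod_{j\neq k}\langle v_j,e_{i_j}\rangle$, and analogously for $y_k$. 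The reverse triangle inequality in $\ell_2$ then bounds the difference of these two norms by the $\ell_2$-norm of the entrywise difference, which is in turn at most
\[\sup_{i_1,\ldots,i_{k-1},i_{k+1},\ldots,i_r}\bigl|[\psi_k(x_k)-\psi_k(y_k)]_{i_1,\ldots,i_{k-1},i_{k+1},\ldots,i_r}\bigr|\cdot\sqrt{\sum_{i_1,\ldots,i_{k-1},i_{k+1},\ldots,i_r}\prod_{j\neq k}\langle v_j,e_{i_j}\rangle^2}.\]
The first factor equals $\eta^{(k)}(x_k,y_k)$ by definition; the second factorizes as $\prod_{j\neq k}\|v_j\|_2\leq 1$ since each $v_j\in B_2^d$. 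Summing over $k$ then yields the lemma.

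I do not expect any step to be hard. The only point worth attention is that the two terms in each telescoping summand share the same entries $v_j$ in coordinates $j\neq k$, so the factorization into an $\ell_2$-norm over the remaining indices and a product of $\langle v_j,e_{i_j}\rangle$ is literally identical on both sides; this is precisely why changing one coordinate at a time is efficient, and it aligns exactly with the way $\psi_k$ was designed in Subsection~\ref{secondmetric} to isolate the $i_k$-direction $\ell_2$-mass of the coefficients $b_{i_1,\ldots,i_r}$.
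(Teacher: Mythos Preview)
Your proposal is correct and follows essentially the same route as the paper: a one-coordinate telescoping, regrouping the $i_k$-sum via $\psi_k$, the reverse triangle inequality in $\ell_2$, and then the $\ell_\infty$--$\ell_2$ bound together with $\prod_{j\neq k}\|v_j\|_2\leq 1$. The only cosmetic difference is that the paper states the single-coordinate estimate first and then invokes telescoping, whereas you write the telescoping identity explicitly at the outset.
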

\begin{proof}
By (\ref{taunorm}), for all $k\in[r]$ and $x_{1},\ldots,x_{r},y_{k}\in B_{2}^{d}$, we have
\begin{align*}
&\|\tau(x_{1},\ldots,x_{r})\|_{2}-\|\tau(x_{1},\ldots,x_{k-1},y_{k},x_{k+1},\ldots,x_{r})\|_{2}\\=&
\left(\sum_{i_{1},\ldots,i_{r}}b_{i_{1},\ldots,i_{r}}^{2}\langle x_{1},e_{i_{1}}\rangle^{2}\ldots\langle x_{r},e_{i_{r}}\rangle^{2}\right)^{\frac{1}{2}}\\&
-\left(\sum_{i_{1},\ldots,i_{r}}b_{i_{1},\ldots,i_{r}}^{2}\langle x_{1},e_{i_{1}}\rangle^{2}\ldots\langle x_{k-1},e_{i_{k-1}}\rangle^{2}\langle y_{k},e_{i_{k}}\rangle^{2}\langle x_{k+1},e_{i_{k+1}}\rangle^{2}\ldots\langle x_{r},e_{i_{r}}\rangle^{2}\right)^{\frac{1}{2}}\\=&
\left(\sum_{i_{1},\ldots,i_{k-1},i_{k+1},\ldots,i_{r}}[\psi_{k}(x_{k})]_{i_{1},\ldots,i_{k-1},i_{k+1},\ldots,i_{r}}^{2}\prod_{j\in[r]\backslash\{k\}}\langle x_{j},e_{i_{j}}\rangle^{2}\right)^{\frac{1}{2}}\\&
-\left(\sum_{i_{1},\ldots,i_{k-1},i_{k+1},\ldots,i_{r}}[\psi_{k}(y_{k})]_{i_{1},\ldots,i_{k-1},i_{k+1},\ldots,i_{r}}^{2}\prod_{j\in[r]\backslash\{k\}}\langle x_{j},e_{i_{j}}\rangle^{2}\right)^{\frac{1}{2}}\\\leq&
\left(\sum_{i_{1},\ldots,i_{k-1},i_{k+1},\ldots,i_{r}}[\psi_{k}(x_{k})-\psi_{k}(y_{k})]_{i_{1},\ldots,i_{k-1},i_{k+1},\ldots,i_{r}}^{2}\prod_{j\in[r]\backslash\{k\}}\langle x_{j},e_{i_{j}}\rangle^{2}\right)^{\frac{1}{2}}\\\leq&
\|\psi_{k}(x_{k})-\psi_{k}(y_{k})\|_{\infty}\left(\sum_{i_{1},\ldots,i_{k-1},i_{k+1},\ldots,i_{r}}\prod_{j\in[r]\backslash\{k\}}\langle x_{j},e_{i_{j}}\rangle^{2}\right)^{\frac{1}{2}}\\=&
\|\psi_{k}(x_{k})-\psi_{k}(\widetilde{x}_{k})\|_{\infty}
\prod_{j\in[r]\backslash\{k\}}\|x_{j}\|_{2}\leq\|\psi_{k}(x_{k})-\psi_{k}(y_{k})\|_{\infty}=\eta^{(k)}(x_{k},y_{k}).
\end{align*}
Therefore, for all $k\in[r]$ and $x_{1},\ldots,x_{r},y_{k}\in B_{2}^{d}$, we have
\[\|\tau(x_{1},\ldots,x_{k})\|_{2}-\|\tau(x_{1},\ldots,x_{k-1},y_{k},x_{k+1},\ldots,x_{r})\|_{2}\leq\eta^{(k)}(x_{k},y_{k}).\]
So for all $x_{1},\ldots,x_{r},y_{1},\ldots,y_{r}\in B_{2}^{d}$, we have
\[\|\tau(x_{1},\ldots,x_{r})\|_{2}-\|\tau(y_{1},\ldots,y_{r})\|_{2}\leq\sum_{k=1}^{r}\eta^{(k)}(x_{k},y_{k}).\]
\end{proof}
\pagebreak

The following result establishes a relation between the diagonal matrix $D_{x_{1},\ldots,x_{k-1},x_{k+1},\ldots,x_{r}}^{(k)}$, introduced in the previous subsection, and the metrics $\eta^{(1)},\ldots,\eta^{(r)}$.
\begin{lemma}\label{deta}
For all $k\in[r]$, $x_{1},\ldots,x_{k-1},x_{k+1},\ldots,x_{r},y_{1},\ldots,y_{k-1},y_{k+1},\ldots,y_{r}\in B_{2}^{d}$, we have
\begin{align*}
&\|D_{x_{1},\ldots,x_{k-1},x_{k+1},\ldots,x_{r}}^{(k)}-D_{y_{1},\ldots,y_{k-1},y_{k+1},\ldots,y_{r}}^{(k)}\|_{\infty}\\\leq&
\eta^{(1)}(x_{1},y_{1})+\ldots+\eta^{(k-1)}(x_{k-1},y_{k-1})+\eta^{(k+1)}(x_{k+1},y_{k+1})+\ldots+\eta^{(r)}(x_{r},y_{r}).
\end{align*}
\end{lemma}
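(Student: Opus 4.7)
The plan is to reduce the statement to an application of Lemma \ref{taueta}, applied to an auxiliary $(r-1)$-fold multilinear map parametrized by $i_{k}$. Fix $k\in[r]$ and fix an index $i_{k}\in[d]$. Define the coefficients $c_{i_{1},\ldots,i_{k-1},i_{k+1},\ldots,i_{r}}^{(i_{k})}:=b_{i_{1},\ldots,i_{r}}$ (with $i_{k}$ held fixed), and let $\tau^{(k,i_{k})}$ denote the associated $(r-1)$-fold multilinear map on $(\mathbb{R}^{d})^{r-1}$. By the very definition of $D^{(k)}$,
\[
(D_{x_{1},\ldots,x_{k-1},x_{k+1},\ldots,x_{r}}^{(k)})_{i_{k},i_{k}}=\|\tau^{(k,i_{k})}(x_{1},\ldots,x_{k-1},x_{k+1},\ldots,x_{r})\|_{2}.
\]

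Next, I would apply Lemma \ref{taueta} to the multilinear map $\tau^{(k,i_{k})}$. This yields, for the tuples $(x_{j})_{j\neq k}$ and $(y_{j})_{j\neq k}$ in $B_{2}^{d}$,
\[
(D_{x_{1},\ldots,x_{k-1},x_{k+1},\ldots,x_{r}}^{(k)})_{i_{k},i_{k}}-(D_{y_{1},\ldots,y_{k-1},y_{k+1},\ldots,y_{r}}^{(k)})_{i_{k},i_{k}}\leq\sum_{\ell\in[r]\setminus\{k\}}\widetilde{\eta}_{\ell}^{(k,i_{k})}(x_{\ell},y_{\ell}),
\]
where $\widetilde{\eta}_{\ell}^{(k,i_{k})}$ is the secondary metric from Subsection \ref{secondmetric} associated with the coefficients $c^{(i_{k})}$. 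The corresponding map $\widetilde{\psi}_{\ell}^{(k,i_{k})}$ has entries
\[
\bigl[\widetilde{\psi}_{\ell}^{(k,i_{k})}(x)\bigr]_{i_{1},\ldots,i_{\ell-1},i_{\ell+1},\ldots,i_{r}\text{ (skip }i_{k}\text{)}}=\sqrt{\sum_{i_{\ell}\in[d]}b_{i_{1},\ldots,i_{r}}^{2}\langle x,e_{i_{\ell}}\rangle^{2}}\,,
\]
which is exactly the restriction of $\psi_{\ell}(x)$ to the coordinates where the $i_{k}$-slot is equal to the fixed value. Consequently $\|\widetilde{\psi}_{\ell}^{(k,i_{k})}(x)-\widetilde{\psi}_{\ell}^{(k,i_{k})}(y)\|_{\infty}\leq\|\psi_{\ell}(x)-\psi_{\ell}(y)\|_{\infty}=\eta^{(\ell)}(x,y)$, so
\[
(D_{x_{1},\ldots,x_{k-1},x_{k+1},\ldots,x_{r}}^{(k)})_{i_{k},i_{k}}-(D_{y_{1},\ldots,y_{k-1},y_{k+1},\ldots,y_{r}}^{(k)})_{i_{k},i_{k}}\leq\sum_{\ell\in[r]\setminus\{k\}}\eta^{(\ell)}(x_{\ell},y_{\ell}).
\]

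By symmetry (swapping $x$'s and $y$'s) the same bound holds for the absolute value of the difference. Taking the maximum over $i_{k}\in[d]$ then gives the claimed inequality. There is no real obstacle: the only thing to check carefully is that the auxiliary secondary metrics $\widetilde{\eta}_{\ell}^{(k,i_{k})}$ are dominated by the original $\eta^{(\ell)}$, which is immediate since passing to a fixed value of $i_{k}$ reduces an $\ell^{\infty}$ norm on a larger index set to an $\ell^{\infty}$ norm on a smaller one. As an alternative, one can avoid invoking Lemma \ref{taueta} as a black box and simply rerun its telescoping argument (replacing $x_{\ell}$ by $y_{\ell}$ one coordinate at a time, $\ell\neq k$), at the cost of duplicating the computation.
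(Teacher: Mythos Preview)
Your argument is correct, and it shares the same core idea as the paper's proof: reduce the diagonal entry of $D^{(k)}$ to a value of $\|\tau(\cdots)\|_{2}$ and invoke Lemma~\ref{taueta}. The difference is only in the packaging. You pass to an auxiliary $(r-1)$-fold map $\tau^{(k,i_{k})}$, apply Lemma~\ref{taueta} there, and then verify that the resulting auxiliary metrics $\widetilde{\eta}_{\ell}^{(k,i_{k})}$ are dominated by $\eta^{(\ell)}$ because restricting $i_{k}$ shrinks the index set of an $\ell^{\infty}$ norm. The paper instead stays with the original $r$-fold $\tau$: using \eqref{taud} it observes that $(D_{x_{1},\ldots,x_{k-1},x_{k+1},\ldots,x_{r}}^{(k)})_{i,i}=\|D_{x_{1},\ldots,x_{k-1},x_{k+1},\ldots,x_{r}}^{(k)}e_{i}\|_{2}=\|\tau(x_{1},\ldots,x_{k-1},e_{i},x_{k+1},\ldots,x_{r})\|_{2}$, and then applies Lemma~\ref{taueta} directly with $x_{k}=y_{k}=e_{i}$, so that the $\eta^{(k)}$ term is automatically $\eta^{(k)}(e_{i},e_{i})=0$. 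This sidesteps the auxiliary construction and the domination check entirely. Your route works just as well; the paper's is simply a shade shorter.
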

\begin{proof}
Since $D_{x_{1},\ldots,x_{k-1},x_{k+1},\ldots,x_{r}}^{(k)}$ and $D_{y_{1},\ldots,y_{k-1},y_{k+1},\ldots,y_{r}}^{(k)}$ are diagonal matrices with nonnegative entries,
\begin{align*}
&\|D_{x_{1},\ldots,x_{k-1},x_{k+1},\ldots,x_{r}}^{(k)}-D_{y_{1},\ldots,y_{k-1},y_{k+1},\ldots,y_{r}}^{(k)}\|_{\infty}\\=&
\max_{i\in[d]}\left|\|D_{x_{1},\ldots,x_{k-1},x_{k+1},\ldots,x_{r}}^{(k)}e_{i}\|_{2}-\|D_{y_{1},\ldots,y_{k-1},y_{k+1},\ldots,y_{r}}^{(k)}e_{i}\|_{2}\right|\\=&
\max_{i\in[d]}|\|\tau(x_{1},\ldots,x_{k-1},e_{i},x_{k+1},\ldots,x_{r})\|_{2}-\|\tau(y_{1},\ldots,y_{k-1},e_{i},y_{k+1},\ldots,y_{r})\|_{2}|\\&
\text{by }(\ref{taud})\\\leq&
\eta^{(1)}(x_{1},y_{1})+\ldots+\eta^{(k-1)}(x_{k-1},y_{k-1})+\eta^{(k+1)}(x_{k+1},y_{k+1})+\ldots+\eta^{(r)}(x_{r},y_{r})\\&
\text{by Lemma }\ref{taueta}.
\end{align*}
\end{proof}
In the rest of this subsection, we bound the covering number $N(B_{2}^{d},\eta^{(k)},\epsilon)$ and the Dudley's entropy integral for $\eta^{(k)}$.
\begin{lemma}\label{abt0}
Let $\alpha,\beta,t_{0}\geq 0$. If $|\sqrt{\alpha}-\sqrt{\beta}|\geq t_{0}$ then $|\alpha-\beta|\geq\max(t_{0}\sqrt{\beta},t_{0}^{2})$.
\end{lemma}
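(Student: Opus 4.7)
The key observation is the factorization
\[|\alpha-\beta| = |\sqrt{\alpha}-\sqrt{\beta}|\cdot(\sqrt{\alpha}+\sqrt{\beta}),\]
which holds for all $\alpha,\beta\geq 0$. Using the hypothesis $|\sqrt{\alpha}-\sqrt{\beta}|\geq t_{0}$, this immediately gives $|\alpha-\beta|\geq t_{0}(\sqrt{\alpha}+\sqrt{\beta})$, so the entire proof reduces to bounding $\sqrt{\alpha}+\sqrt{\beta}$ from below in two different ways.

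For the first bound, since $\sqrt{\alpha}\geq 0$ we have $\sqrt{\alpha}+\sqrt{\beta}\geq\sqrt{\beta}$, which yields $|\alpha-\beta|\geq t_{0}\sqrt{\beta}$. For the second bound, the triangle inequality for absolute values gives $\sqrt{\alpha}+\sqrt{\beta}\geq|\sqrt{\alpha}-\sqrt{\beta}|\geq t_{0}$, which yields $|\alpha-\beta|\geq t_{0}^{2}$. Taking the maximum of these two lower bounds gives the conclusion.

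There is no real obstacle here; the lemma is essentially a one-line consequence of the difference-of-squares identity once one spots that the two cases in the maximum correspond to the two trivial lower bounds on $\sqrt{\alpha}+\sqrt{\beta}$. No case split on which of $\alpha,\beta$ is larger is needed, because the identity $|\alpha-\beta|=|\sqrt{\alpha}-\sqrt{\beta}|(\sqrt{\alpha}+\sqrt{\beta})$ already handles both orderings symmetrically.
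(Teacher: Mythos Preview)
Your proof is correct and essentially identical to the paper's own argument: both use the factorization $|\alpha-\beta|=|\sqrt{\alpha}-\sqrt{\beta}|(\sqrt{\alpha}+\sqrt{\beta})\geq t_{0}(\sqrt{\alpha}+\sqrt{\beta})$ and then bound $\sqrt{\alpha}+\sqrt{\beta}$ below by $\sqrt{\beta}$ and by $|\sqrt{\alpha}-\sqrt{\beta}|\geq t_{0}$.
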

\begin{proof}
We have
\[|\alpha-\beta|=|\sqrt{\alpha}-\sqrt{\beta}|\cdot|\sqrt{\alpha}+\sqrt{\beta}|\geq t_{0}(\sqrt{\alpha}+\sqrt{\beta}).\]
Since $\sqrt{\alpha}+\sqrt{\beta}\geq\sqrt{\beta}$ and $\sqrt{\alpha}+\sqrt{\beta}\geq|\sqrt{\alpha}-\sqrt{\beta}|\geq t_{0}$, the result follows.
\end{proof}
\begin{lemma}\label{sqrtconc}
Let $Z_{1},\ldots,Z_{n}$ be independent, identically distributed random variables taking values in $[0,1]$. Then
\[\mathbb{P}\left(\left|\sqrt{\frac{Z_{1}+\ldots+Z_{n}}{n}}-\sqrt{\mathbb{E}Z_{1}}\right|\geq\frac{t}{\sqrt{n}}\right)\geq 2e^{-t^{2}/4},\]
for all $t\geq 0$.
\end{lemma}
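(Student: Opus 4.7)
The plan is to reduce the square-root deviation event to a deviation event for the sample mean via Lemma~\ref{abt0}, then control the latter with Bernstein's inequality. (The conclusion must be an upper bound — $2e^{-t^2/4}$ exceeds $1$ for small $t$ — so I read it as $\leq 2e^{-t^2/4}$.) Write $\mu=\mathbb{E}Z_1$ and $\overline{Z}=(Z_1+\cdots+Z_n)/n$. Applying Lemma~\ref{abt0} with $\alpha=\overline{Z}$, $\beta=\mu$, and $t_0=t/\sqrt{n}$, the event $|\sqrt{\overline{Z}}-\sqrt{\mu}|\geq t/\sqrt{n}$ forces
\[
|\overline{Z}-\mu|\;\geq\;\max\!\Big(\tfrac{t\sqrt{\mu}}{\sqrt{n}},\,\tfrac{t^{2}}{n}\Big).
\]
It therefore suffices to upper-bound $\mathbb{P}(|\overline{Z}-\mu|\geq s)$ when $s$ attains this maximum.

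The key observation for Bernstein is that $Z_i\in[0,1]$ gives $Z_i^2\leq Z_i$, hence $\mathrm{Var}(Z_1)\leq\mathbb{E}Z_1=\mu$, while $|Z_i-\mu|\leq 1$. Bernstein's inequality for bounded i.i.d.\ variables then yields, for every $s\geq 0$,
\[
\mathbb{P}(|\overline{Z}-\mu|\geq s)\;\leq\;2\exp\!\Big(-\tfrac{ns^{2}/2}{\mu+s/3}\Big).
\]

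The conclusion follows from a short case split designed to show the Bernstein exponent is always at least $t^{2}/4$. If $t^{2}\leq n\mu$, substitute $s=t\sqrt{\mu/n}$; then $s\leq\mu$, so $\mu+s/3\leq 4\mu/3$, giving exponent at least $\tfrac{n(t^{2}\mu/n)/2}{4\mu/3}=3t^{2}/8$. If $t^{2}\geq n\mu$, substitute $s=t^{2}/n$; then $\mu\leq s$, so $\mu+s/3\leq 4s/3$, and the exponent is at least $\tfrac{ns^{2}/2}{4s/3}=3ns/8=3t^{2}/8$. Either way we obtain $2e^{-3t^{2}/8}\leq 2e^{-t^{2}/4}$, as required.

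The only ingredient that genuinely uses the hypothesis is the variance bound $\mathrm{Var}(Z_1)\leq\mu$, which crucially exploits $Z_i\in[0,1]$ rather than merely $|Z_i|\leq 1$; without this the Bernstein denominator would carry an extra $\mu$-independent term and the bound would degrade once $\mu\to 0$. Everything else is a mechanical combination of Lemma~\ref{abt0} with a standard inequality, so I do not anticipate any real obstacle beyond the case distinction above.
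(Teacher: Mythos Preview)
Your proof is correct and follows essentially the same route as the paper: reduce via Lemma~\ref{abt0} to a deviation of the sample mean, apply Bernstein with the crucial variance bound $\mathrm{Var}(Z_1)\leq\mathbb{E}Z_1$ coming from $Z_i\in[0,1]$, and check that the resulting exponent dominates $t^2/4$. The only cosmetic difference is that the paper bounds the Bernstein denominator by $2\max(n\mu,u/3)$ and then plugs in $u=\max(t\sqrt{\mu}/\sqrt{n},\,t^2/n)$ to obtain $\min(\tfrac{u^2n}{4\mu},\tfrac{3un}{4})\geq t^2/4$, whereas you perform an explicit case split on which term in the max is active; your bookkeeping actually yields the slightly sharper exponent $3t^2/8$. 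You are also right that the displayed inequality in the statement is a typo for $\leq$.
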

\begin{proof}
We use the argument in \cite[Proof of Theorem 3.1.1]{Romanbook}. By Bernstein's inequality \cite[Theorem 2.8.4]{Romanbook}, we have
\[\mathbb{P}(|Z_{1}+\ldots+Z_{n}-n\mathbb{E}Z_{1}|\geq u)\leq 2\exp\left(-\frac{u^{2}/2}{n\mathbb{E}(Z_{1}-\mathbb{E}Z_{1})^{2}+u/3}\right),\]
for all $u\geq 0$. Since $\mathbb{E}(Z_{1}-\mathbb{E}Z_{1})^{2}\leq\mathbb{E}Z_{1}^{2}\leq\mathbb{E}Z_{1}$, we have
\[\frac{u^{2}/2}{n\mathbb{E}(Z_{1}-\mathbb{E}Z_{1})^{2}+u/3}\geq\frac{u^{2}/2}{n\mathbb{E}Z_{1}+u/3}\geq\frac{u^{2}/2}{2\max(n\mathbb{E}Z_{1},u/3)}=\min\left(\frac{u^{2}}{4n\mathbb{E}Z_{1}},\,\frac{3u}{4}\right),\]
and so
\[\mathbb{P}(|Z_{1}+\ldots+Z_{n}-n\mathbb{E}Z_{1}|\geq u)\leq 2\exp\left(-\min\left(\frac{u^{2}}{4n\mathbb{E}Z_{1}},\,\frac{3u}{4}\right)\right),\]
for all $u\geq 0$. Thus,
\begin{equation}\label{sqrtconceq1}
\mathbb{P}\left(\left|\frac{Z_{1}+\ldots+Z_{n}}{n}-\mathbb{E}Z_{1}\right|\geq u\right)\leq
2\exp\left(-\min\left(\frac{u^{2}n}{4\mathbb{E}Z_{1}},\,\frac{3un}{4}\right)\right),
\end{equation}
for all $u\geq 0$. By Lemma \ref{abt0},
\begin{align*}
&\mathbb{P}\left(\left|\sqrt{\frac{Z_{1}+\ldots+Z_{n}}{n}}-\sqrt{\mathbb{E}Z_{1}}\right|\geq\frac{t}{\sqrt{n}}\right)\\\leq&
\mathbb{P}\left(\left|\frac{Z_{1}+\ldots+Z_{n}}{n}-\mathbb{E}Z_{1}\right|\geq\max\left(\frac{t\sqrt{\mathbb{E}Z_{1}}}{\sqrt{n}},\,\frac{t^{2}}{n}\right)\right).
\end{align*}
Taking $u=\max(\frac{t\sqrt{\mathbb{E}Z_{1}}}{\sqrt{n}},\frac{t^{2}}{n})$ in (\ref{sqrtconceq1}), we have $\min(\frac{u^{2}n}{4\mathbb{E}Z_{1}},\frac{3un}{4})\geq\frac{t^{2}}{4}$ and the result follows.
\end{proof}
In the sequel, if $z$ is a vector with nonnegative entries, then $\sqrt{z}$ is the vector obtained by taking square root on each entry of $z$.
\begin{lemma}\label{coveringmaurey}
Let $d_{0}\geq 2$ and let $S\subset[0,1]^{d_{0}}$ be a finite set. Then the covering number
\[N(\{\sqrt{z}:z\in\mathrm{conv}(S)\},\|\,\|_{\infty},\epsilon)\leq\exp\left(\frac{C(\ln d_{0})(\ln|S|)}{\epsilon^{2}}\right).\]
for all $\epsilon>0$, where $C\geq 1$ is a universal constant and $\mathrm{conv}(S)$ is the convex hull of $S$.
\end{lemma}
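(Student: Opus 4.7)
The plan is to use a Maurey-style empirical approximation combined with the coordinate-wise square-root concentration already established in Lemma \ref{sqrtconc}. Fix an arbitrary $z\in\mathrm{conv}(S)$ and write $z=\sum_{s\in S}\lambda_{s}s$ with $\lambda_{s}\geq 0$ and $\sum_{s}\lambda_{s}=1$. Let $Y_{1},\ldots,Y_{n}$ be i.i.d.\ random vectors in $S\subset[0,1]^{d_{0}}$ drawn according to the probability weights $\lambda_{s}$, and set $\bar{Y}=\frac{1}{n}(Y_{1}+\ldots+Y_{n})$. Then $\mathbb{E}\bar{Y}=z$ and, coordinate-wise, each $Y_{i}^{(j)}$ takes values in $[0,1]$ with mean $z_{j}$.

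Apply Lemma \ref{sqrtconc} in each coordinate $j\in[d_{0}]$ to get
\[\mathbb{P}\!\left(\bigl|\sqrt{\bar{Y}_{j}}-\sqrt{z_{j}}\,\bigr|\geq \tfrac{t}{\sqrt{n}}\right)\leq 2e^{-t^{2}/4},\]
and take a union bound over the $d_{0}$ coordinates to obtain
\[\mathbb{P}\!\left(\|\sqrt{\bar{Y}}-\sqrt{z}\|_{\infty}\geq\tfrac{t}{\sqrt{n}}\right)\leq 2d_{0}e^{-t^{2}/4}.\]
Choosing $t=c_{1}\sqrt{\ln d_{0}}$ for a sufficiently large universal constant $c_{1}$ makes the right-hand side strictly less than $1$, so there exists at least one realization $(y_{1},\ldots,y_{n})\in S^{n}$ for which the corresponding average $\bar{y}$ satisfies $\|\sqrt{\bar{y}}-\sqrt{z}\|_{\infty}\leq t/\sqrt{n}$.

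Now choose $n=\bigl\lceil c_{1}^{2}\ln d_{0}/\epsilon^{2}\bigr\rceil$ so that $t/\sqrt{n}\leq\epsilon$. The above shows that the finite set
\[\mathcal{C}:=\Bigl\{\sqrt{\tfrac{1}{n}(y_{1}+\ldots+y_{n})}:y_{1},\ldots,y_{n}\in S\Bigr\}\]
is an $\epsilon$-cover of $\{\sqrt{z}:z\in\mathrm{conv}(S)\}$ in the $\|\cdot\|_{\infty}$-metric. Since $|\mathcal{C}|\leq|S|^{n}$, we conclude
\[N\bigl(\{\sqrt{z}:z\in\mathrm{conv}(S)\},\|\cdot\|_{\infty},\epsilon\bigr)\leq|S|^{n}\leq\exp\!\left(\frac{C(\ln d_{0})(\ln|S|)}{\epsilon^{2}}\right),\]
as claimed. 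The only non-routine point is the coordinate-wise square-root concentration, which is already handled by Lemma \ref{sqrtconc}; the rest is a standard empirical-method union bound, so I do not anticipate any real obstacle beyond bookkeeping of constants.
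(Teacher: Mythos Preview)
Your proposal is correct and is essentially identical to the paper's own proof: both use Maurey's empirical method, apply Lemma~\ref{sqrtconc} coordinate-wise, take a union bound over the $d_{0}$ coordinates, and then choose $t\asymp\sqrt{\ln d_{0}}$ and $n\asymp(\ln d_{0})/\epsilon^{2}$ to obtain the $\epsilon$-cover $\{\sqrt{(y_{1}+\cdots+y_{n})/n}:y_{i}\in S\}$ of cardinality at most $|S|^{n}$. The only differences are cosmetic (notation and the explicit constants $t=\sqrt{4\ln(4d_{0})}$, $n=\lceil 4\ln(4d_{0})/\epsilon^{2}\rceil$ in the paper versus your generic $c_{1}$).
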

\begin{proof}
We use the Maurey's empirical method. Fix $z\in\mathrm{conv}(S)$. Write $z=\sum_{x\in S}a_{x}x$ where $\sum_{x\in S}a_{x}=1$ with $a_{x}\geq 0$ for all $x\in S$. Randomly and independently select $z_{1},\ldots,z_{n}\in S$ according to the probability mass $(a_{x})_{x\in S}$. Then by Lemma \ref{sqrtconc},
\[\mathbb{P}\left(\left|\sqrt{\left\langle\frac{z_{1}+\ldots+z_{n}}{n},e_{i}\right\rangle}-\sqrt{\langle z,e_{i}\rangle}\right|\geq\frac{t}{\sqrt{n}}\right)\leq 2e^{-t^{2}/4},\]
for all $t\geq 0$ and $i\in[d_{0}]$, so
\begin{align*}
&\mathbb{P}\left(\left\|\sqrt{\frac{z_{1}+\ldots+z_{n}}{n}}-\sqrt{z}\right\|_{\infty}\geq\frac{t}{\sqrt{n}}\right)\\=&
\mathbb{P}\left(\bigcup_{i\in[d_{0}]}\left(\,\left|\sqrt{\left\langle\frac{z_{1}+\ldots+z_{n}}{n},e_{i}\right\rangle}-\sqrt{\langle z,e_{i}\rangle}\right|\leq\frac{t}{\sqrt{n}}\right)\right)\leq 2d_{0}e^{-t^{2}/4},
\end{align*}
for all $t\geq 0$. Take $t=\sqrt{4\ln(4d_{0})}$. Then $2d_{0}e^{-t^{2}/4}=\frac{1}{2}<1$. Thus we conclude that for every $z\in\mathrm{conv}(S)$ and every $n\in\mathbb{N}$, there exist $z_{1},\ldots,z_{n}\in S$ such that 
\[\left\|\sqrt{\frac{z_{1}+\ldots+z_{n}}{n}}-\sqrt{z}\right\|_{\infty}\leq\sqrt{\frac{4\ln(4d_{0})}{n}}.\]
Fix $\epsilon>0$. Take $n=\lceil\frac{4\ln(4d_{0})}{\epsilon^{2}}\rceil$. Then $\{\sqrt{\frac{z_{1}+\ldots+z_{n}}{n}}:\,z_{1},\ldots,z_{n}\in S\}$ is an $\epsilon$-cover of $\{\sqrt{z}:\,z\in\mathrm{conv}(S)\}$ with cardinality at most $|S|^{n}=e^{n\ln|S|}$.
\end{proof}
Recall from the beginning of this subsection that $\psi_{k}:B_{2}^{d}\to\mathbb{R}^{d^{r-1}}$ is defined by\[\psi_{k}(x):=\left(\sqrt{\sum_{i_{k}\in[d]}b_{i_{1},\ldots,i_{r}}^{2}\langle x,e_{i_{k}}\rangle^{2}}\right)_{i_{1},\ldots,i_{k-1},i_{k+1},\ldots,i_{r}\in[d]},\]
for $x\in B_{2}^{d}$, and the metric $\eta^{(k)}$ on $B_{2}^{d}$ is defined by
\[\eta^{(k)}(x,y):=\|\psi_{k}(x)-\psi_{k}(y)\|_{\infty},\]
for $x,y\in B_{2}^{d}$ and $k\in[r]$.
\begin{lemma}\label{coveretak}
We have
\[\int_{0}^{\infty}\max_{k\in[r]}\sqrt{\ln N(B_{2}^{d},\eta^{(k)},\epsilon)}\,d\epsilon\leq C\sqrt{r}(\ln d)^{2}b,\]
where $C\geq 1$ is a universal constant and $b=\max_{i_{1},\ldots,i_{r}\in[d]}|b_{i_{1},\ldots,i_{r}}|$.
\end{lemma}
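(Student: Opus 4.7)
The plan is to combine two covering number estimates on $(B_2^d,\eta^{(k)})$: a Maurey-type bound derived from Lemma \ref{coveringmaurey} that dominates on moderate scales, and a Lipschitz/volume bound handling very small $\epsilon$. The Maurey bound alone is not integrable as $\epsilon\to 0$, and the volume bound alone is too crude for large $\epsilon$, so the two regimes must be matched by splitting the Dudley integral at a suitable cutoff. The main conceptual step is to write $\psi_k(B_2^d)/b$ in the form $\{\sqrt{z}:z\in\mathrm{conv}(S)\}$ needed by Lemma \ref{coveringmaurey}.

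For that step, I expand
\[
[\psi_k(x)]_{i_1,\ldots,i_{k-1},i_{k+1},\ldots,i_r}^2=\sum_{j\in[d]}b_{i_1,\ldots,i_{k-1},j,i_{k+1},\ldots,i_r}^2\langle x,e_j\rangle^2,
\]
and for each $j\in[d]$ let $w^{(j)}\in[0,b^2]^{d^{r-1}}$ have entries $w^{(j)}_{i_1,\ldots,i_{k-1},i_{k+1},\ldots,i_r}:=b_{i_1,\ldots,i_{k-1},j,i_{k+1},\ldots,i_r}^2$. Since $x\in B_2^d$ gives $\sum_j\langle x,e_j\rangle^2\leq 1$, the identity $\psi_k(x)^2=\sum_j\langle x,e_j\rangle^2 w^{(j)}$ shows that $\psi_k(x)^2/b^2\in\mathrm{conv}(S)$, where $S:=\{0\}\cup\{w^{(j)}/b^2:j\in[d]\}\subset[0,1]^{d^{r-1}}$ has $|S|\leq d+1$. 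Hence $\psi_k(x)/b\in\{\sqrt{z}:z\in\mathrm{conv}(S)\}$, and Lemma \ref{coveringmaurey} with $d_0=d^{r-1}$ yields
\[
N\bigl(\{\psi_k(x)/b:x\in B_2^d\},\|\cdot\|_\infty,\epsilon\bigr)\leq\exp(Cr(\ln d)^2/\epsilon^2).
\]
For each point $f$ of such a cover I select a preimage $y_f\in B_2^d$ whose image lies within $\epsilon$ of $f$; by the triangle inequality the $\{y_f\}$ form a cover of $B_2^d$ in $\eta^{(k)}$ of radius $2b\epsilon$, giving $\ln N(B_2^d,\eta^{(k)},\epsilon)\leq C_1 r(\ln d)^2 b^2/\epsilon^2$ after rescaling.

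For the complementary bound, each coordinate of $\psi_k$ is a weighted $\ell^2$-norm in $x$, so the triangle inequality yields $\eta^{(k)}(x,y)\leq b\|x-y\|_2$; combined with the standard estimate $N(B_2^d,\|\cdot\|_2,\delta)\leq(3/\delta)^d$ this gives $\ln N(B_2^d,\eta^{(k)},\epsilon)\leq d\ln(3b/\epsilon)$ for $\epsilon\leq b$, and since $\psi_k$ takes values in $[0,b]^{d^{r-1}}$ the covering number is $1$ for $\epsilon\geq b$. I then split the Dudley integral at $\epsilon_0=b/d$. On $[\epsilon_0,b]$ the Maurey bound integrates to $\sqrt{C_1 r}\,(\ln d)\,b\ln(b/\epsilon_0)=\sqrt{C_1 r}\,(\ln d)^2 b$, while on $(0,\epsilon_0]$ the substitution $\epsilon=3be^{-t}$ reduces the volume integral to $3b\sqrt{d}\int_{\ln(3d)}^\infty\sqrt{t}\,e^{-t}\,dt=O(b\sqrt{\ln d/d})$, which is absorbed into the main term. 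Since none of the bounds depend on $k$, the same estimate controls $\max_{k\in[r]}\sqrt{\ln N(B_2^d,\eta^{(k)},\epsilon)}$, completing the proof.
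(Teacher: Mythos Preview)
Your proposal is correct and follows essentially the same approach as the paper: represent $\psi_k(B_2^d)^2/b^2$ as a subset of $\mathrm{conv}(S)$ with $|S|\leq d+1$ in $[0,1]^{d^{r-1}}$, invoke Lemma~\ref{coveringmaurey} with $d_0=d^{r-1}$ for the Maurey bound, combine with the Lipschitz estimate $\eta^{(k)}(x,y)\leq b\|x-y\|_2$ and the resulting volumetric bound, and split the Dudley integral at $b/d$. The only cosmetic difference is that the paper notes the \emph{equality} $\mathrm{range}(\psi_k)=\{\sqrt{z}:z\in\mathrm{conv}(S)\}$ (every convex combination is realized by $x=\sum_j\sqrt{a_j}\,e_j$), which lets it identify $N(B_2^d,\eta^{(k)},\epsilon)$ with the covering number of $\{\sqrt{z}:z\in\mathrm{conv}(S)\}$ directly and skip your preimage step and its harmless factor of~$2$.
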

\begin{proof}
Since $\eta^{(k)}(x,y)=\|\psi_{k}(x)-\psi_{k}(y)\|_{\infty}$ for all $x,y\in B_{2}^{d}$,
\[N(B_{2}^{d},\eta^{(k)},\epsilon)=N(\mathrm{range}(\psi_{k}),\|\,\|_{\infty},\epsilon).\]
Observe that $\mathrm{range}(\psi_{k})=\{\sqrt{z}:\,z\in\mathrm{conv}(S)\}$,
where
\[S=\{(b_{i_{1},\ldots,i_{r}}^{2})_{i_{1},\ldots,i_{k-1},i_{k+1},\ldots,i_{r}\in[d]}:\,i_{k}\in[d]\}\cup\{0\}\subset\mathbb{R}^{d^{r-1}}.\]
Thus,
\begin{eqnarray*}
N(B_{2}^{d},\eta^{(k)},\epsilon)&=&N(\{\sqrt{z}:\,z\in\mathrm{conv}(S)\},\|\,\|_{\infty},\epsilon)\\&=&
N\left(\left\{\frac{1}{b}\sqrt{z}:\,z\in\mathrm{conv}(S)\right\},\|\,\|_{\infty},\frac{\epsilon}{b}\right)\\&=&
N\left(\left\{\sqrt{z}:\,z\in\mathrm{conv}\left(\frac{1}{b^{2}}S\right)\right\},\|\,\|_{\infty},\frac{\epsilon}{b}\right).
\end{eqnarray*}
Since $|S|\leq d+1$ and $\frac{1}{b^{2}}S\subset[0,1]^{d^{r-1}}$, by Lemma \ref{coveringmaurey} with $d_{0}=d^{r-1}$, we have
\[N\left(\left\{\sqrt{z}:\,z\in\mathrm{conv}\left(\frac{1}{b^{2}}S\right)\right\},\|\,\|_{\infty},\frac{\epsilon}{b}\right)\leq
\exp\left(\frac{C(\ln d^{r-1})(\ln(d+1))}{(\epsilon/b)^{2}}\right).\]
Therefore,
\begin{equation}\label{coveretakeq1}
N(B_{2}^{d},\eta^{(k)},\epsilon)\leq\exp\left(\frac{2Cr(b\ln d)^{2}}{\epsilon^{2}}\right),
\end{equation}
for all $\epsilon>0$ and $k\in[r]$. On the other hand, since
\begin{eqnarray*}
\eta^{(k)}(x,y)&=&\max_{i_{1},\ldots,i_{k-1},i_{k+1},\ldots,i_{r}\in[d]}\left|\sqrt{\sum_{i_{k}\in[d]}b_{i_{1},\ldots,i_{r}}^{2}\langle x,e_{i_{k}}\rangle^{2}}-\sqrt{\sum_{i_{k}\in[d]}b_{i_{1},\ldots,i_{r}}^{2}\langle y,e_{i_{k}}\rangle^{2}}\right|\\&\leq&
\max_{i_{1},\ldots,i_{k-1},i_{k+1},\ldots,i_{r}\in[d]}\left|\sqrt{\sum_{i_{k}\in[d]}b_{i_{1},\ldots,i_{r}}^{2}\langle x-y,e_{i_{k}}\rangle^{2}}\right|
\leq b\|x-y\|_{2},
\end{eqnarray*}
for all $x,y\in B_{2}^{d}$, we have
\begin{equation}\label{coveretakeq2}
N(B_{2}^{d},\eta^{(k)},\epsilon)\leq N(B_{2}^{d},b\|\,\|_{2},\epsilon)=N\left(B_{2}^{d},\|\,\|_{2},\frac{\epsilon}{b}\right)\leq\left(\frac{3b}{\epsilon}\right)^{d},
\end{equation}
for all $0<\epsilon\leq b$ and $k\in[r]$, and the metric space $(B_{2}^{d},\eta^{(k)})$ has diameter at most $2b$. The above two upper bounds (\ref{coveretakeq1}) and (\ref{coveretakeq2}) for $N(B_{2}^{d},\eta^{(k)},\epsilon)$ give
\begin{align*}
&\int_{0}^{\infty}\max_{k\in[r]}\sqrt{\ln N(B_{2}^{d},\eta^{(k)},\epsilon)}\,d\epsilon\\=&
\int_{\frac{b}{d}}^{2b}\max_{k\in[r]}\sqrt{\ln N(B_{2}^{d},\eta^{(k)},\epsilon)}\,d\epsilon+\int_{0}^{\frac{b}{d}}\max_{k\in[r]}\sqrt{\ln N(B_{2}^{d},\eta^{(k)},\epsilon)}\,d\epsilon\\\leq&
\int_{\frac{b}{d}}^{2b}\frac{\sqrt{2Cr}(b\ln d)}{\epsilon}\,d\epsilon+\int_{0}^{\frac{b}{d}}\sqrt{d\ln\left(\frac{3b}{\epsilon}\right)}\,d\epsilon\\=&
\sqrt{2Cr}(b\ln d)\ln(2d)+\frac{b}{\sqrt{d}}\int_{0}^{1}\sqrt{\ln\left(\frac{3d}{\epsilon}\right)}\,d\epsilon.
\end{align*}
Since $\int_{0}^{1}\sqrt{\ln(\frac{3}{\epsilon})}\,d\epsilon<\infty$, the result follows.
\end{proof}
\subsection{Summary of properties}\label{summary}
\begin{itemize}
\item ({\bf Definition of $\tau$}) The multilinear map
$\tau:\underbrace{\mathbb{R}^{d}\times\ldots\times\mathbb{R}^{d}}_{r}\to\mathbb{R}^{d^{r}}$ is defined by
\begin{equation}\label{staudef}
\tau(x_{1},\ldots,x_{r}):=(b_{i_{1},\ldots,i_{r}}\langle x_{1},e_{i_{1}}\rangle\ldots\langle x_{r},e_{i_{r}}\rangle)_{i_{1},\ldots,i_{r}\in[d]},
\end{equation}
for $x_{1},\ldots,x_{r}\in\mathbb{R}^{d}$.\\
\item ({\bf Relation between $\tau$ and $D_{x_{1},\ldots,x_{k-1},x_{k+1},\ldots,x_{r}}^{(k)}$})\\
For given $k\in[r]$ and $x_{1},\ldots,x_{k-1},x_{k+1},\ldots,x_{r}\in\mathbb{R}^{d}$, the $d\times d$ diagonal matrix $D_{x_{1},\ldots,x_{k-1},x_{k+1},\ldots,x_{r}}^{(k)}$ satisfies
\begin{equation}\label{staud}
\|\tau(x_{1},\ldots,x_{r})\|_{2}=
\|D_{x_{1},\ldots,x_{k-1},x_{k+1},\ldots,x_{r}}^{(k)}x_{k}\|_{2},
\end{equation}
for all $x_{k}\in\mathbb{R}^{d}$.\\
\item ({\bf Relation between $D_{x_{1},\ldots,x_{k-1},x_{k+1},\ldots,x_{r}}^{(k)}$ and $\eta^{(1)},\ldots,\eta^{(r)}$}) The metrics $\eta^{(1)},\ldots,\eta^{(r)}$ on $B_{2}^{d}$ satisfy
\begin{align}\label{sdeta}
&\|D_{x_{1},\ldots,x_{k-1},x_{k+1},\ldots,x_{r}}^{(k)}-D_{y_{1},\ldots,y_{k-1},y_{k+1},\ldots,y_{r}}^{(k)}\|_{\infty}\\\leq&
\eta^{(1)}(x_{1},y_{1})+\ldots+\eta^{(k-1)}(x_{k-1},y_{k-1})+\eta^{(k+1)}(x_{k+1},y_{k+1})+\ldots+\eta^{(r)}(x_{r},y_{r}),\nonumber
\end{align}
for all $k\in[r]$, $x_{1},\ldots,x_{k-1},x_{k+1},\ldots,x_{r},y_{1},\ldots,y_{k-1},y_{k+1},\ldots,y_{r}\in B_{2}^{d}$.\\
\item ({\bf Dudley's entropy integral for $\eta^{(k)}$})
\begin{equation}\label{setadudley}
\int_{0}^{\infty}\max_{k\in[r]}\sqrt{\ln N(B_{2}^{d},\eta^{(k)},\epsilon)}\,d\epsilon\leq C\sqrt{r}(\ln d)^{2}b,
\end{equation}
where $b=\max_{i_{1},\ldots,i_{r}\in[d]}|b_{i_{1},\ldots,i_{r}}|$.
\end{itemize}

\section{Proof of the main result}\label{proofsection}
The proof of Theorem \ref{main} below is partly based on the argument in \cite[Proof of Theorem 4.1]{vHTrans} that uses the Slepian-Fernique inequality to bound the expected spectral norm of a Gaussian random matrix. The idea of the argument there is as follows. The induced metric from the spectral norm of the Gaussian random matrix is shown to be bounded by the induced metric of another Gaussian process plus a second term. Using a spectral decomposition, one can bound this second term by the induced metric of some Gaussian process. Thus the Slepian-Fernique inequality can be applied.

In the proof of Theorem \ref{main} below, we are also able to show that the induced metric from the injective norm of the Gaussian tensor $Z$ defined in Theorem \ref{main} is bounded by the induced metric of another Gaussian process plus a second metric. However, it is not clear if we can bound this second metric by the induced metric of a useful Gaussian process. Thus we need a version of the Slepian-Fernique inequality that allows the flexibility of using the Dudley's entropy integral for the second metric.

Let $(T,\rho)$ be a metric space. For $A\subset T$, the {\it diameter} of $A$ is $\mathrm{diam}(A,\rho):=\sup_{x,y\in A}\rho(x,y)$. If $\mathcal{A}$ is a partition of $T$ and $t\in T$, then $\mathcal{A}(t)$ denotes the block in $\mathcal{A}$ containing $t$. An {\it admissible} sequence of $T$ is an increasing sequence $\mathcal{A}_{0},\mathcal{A}_{1},\ldots$ of partitions of $T$ such that $\mathcal{A}_{0}=\{T\}$ and $|\mathcal{A}_{n}|\leq 2^{2^{n}}$ for all $n\geq 1$. (By {\it increasing}, we mean that every block of the partition $\mathcal{A}_{n+1}$ is contained in a block of the partition $\mathcal{A}_{n}$.) Define
\[\gamma_{2}(T,\rho):=\inf\sup_{t\in T}\sum_{n\geq 0}2^{\frac{n}{2}}\mathrm{diam}(\mathcal{A}_{n}(t),\rho),\]
where the infimum is over all admissible sequences $\mathcal{A}_{0},\mathcal{A}_{1},\ldots$ of $T$ \cite[Definition 2.7.3]{Talagrandul}.

An {\it ultrametric} $\rho$ on a set $T$ is a metric on $T$ such that $\rho(t_{1},t_{3})\leq\max(\rho(t_{1},t_{2}),\rho(t_{2},t_{3}))$ for all $t_{1},t_{2},t_{3}\in T$. The condition that $\rho(t,s)=0$ only when $t=s$ is not required.
\begin{lemma}[\cite{Talagrandul}, Theorem 16.8.15]\label{ultralemma}
Let $\rho$ be a metric on a set $T$. Let $\delta>0$. Then there exists an ultrametric $\widehat{\rho}$ on $T$ such that $\rho(t,s)\leq\widehat{\rho}(t,s)$, for all $t,s\in T$, and
\[\gamma_{2}(T,\widehat{\rho})\leq(1+\delta)\gamma_{2}(T,\rho).\]
\end{lemma}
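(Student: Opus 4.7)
The plan is to use the standard Talagrand-style construction: extract a near-optimal admissible sequence for $(T,\rho)$ and build the ultrametric directly from it.

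First I would fix an admissible sequence $\mathcal{A}_{0},\mathcal{A}_{1},\ldots$ of $T$ such that
\[\sup_{t\in T}\sum_{n\geq 0}2^{n/2}\mathrm{diam}(\mathcal{A}_{n}(t),\rho)\leq(1+\delta)\gamma_{2}(T,\rho),\]
which is possible by the definition of $\gamma_{2}$ as an infimum. For distinct $s,t\in T$, let $n(s,t)$ be the largest $n\geq 0$ with $\mathcal{A}_{n}(s)=\mathcal{A}_{n}(t)$ (well defined since $\mathcal{A}_{0}=\{T\}$; if such $n$ are unbounded, I read $\mathcal{A}_{n(s,t)}(s)$ as $\bigcap_{n}\mathcal{A}_{n}(s)$ with diameter $\inf_{n}\mathrm{diam}(\mathcal{A}_{n}(s),\rho)$). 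Define
\[\widehat{\rho}(s,t):=\mathrm{diam}(\mathcal{A}_{n(s,t)}(s),\rho)\quad\text{for }s\neq t,\qquad\widehat{\rho}(s,s):=0.\]

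Three properties then need verification. For the \textbf{ultrametric inequality}, given $s,t,u\in T$ and assuming without loss of generality $n(s,t)\leq n(t,u)$, at level $m=n(s,t)$ the blocks satisfy $\mathcal{A}_{m}(s)=\mathcal{A}_{m}(t)=\mathcal{A}_{m}(u)$, so $n(s,u)\geq m$ and hence $\mathcal{A}_{n(s,u)}(s)\subset\mathcal{A}_{m}(s)$, yielding
\[\widehat{\rho}(s,u)\leq\mathrm{diam}(\mathcal{A}_{m}(s),\rho)=\widehat{\rho}(s,t)\leq\max(\widehat{\rho}(s,t),\widehat{\rho}(t,u)).\]
\textbf{Domination} $\rho\leq\widehat{\rho}$ is immediate because $s,t\in\mathcal{A}_{n(s,t)}(s)$. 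For the \textbf{bound on $\gamma_{2}(T,\widehat{\rho})$} I would reuse the same admissible sequence: for any $t\in T$ and any $s\in\mathcal{A}_{n}(t)$ we have $n(s,t)\geq n$, hence $\mathcal{A}_{n(s,t)}(s)\subset\mathcal{A}_{n}(t)$ and $\widehat{\rho}(s,t)\leq\mathrm{diam}(\mathcal{A}_{n}(t),\rho)$. Taking the supremum over $s$ gives $\mathrm{diam}(\mathcal{A}_{n}(t),\widehat{\rho})\leq\mathrm{diam}(\mathcal{A}_{n}(t),\rho)$, and summing against the weights $2^{n/2}$ produces $\gamma_{2}(T,\widehat{\rho})\leq(1+\delta)\gamma_{2}(T,\rho)$.

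The only real subtlety I anticipate is the degenerate case where $\mathcal{A}_{n}(s)=\mathcal{A}_{n}(t)$ for every $n$ despite $s\neq t$, which is possible in the present semi-metric setting where distinct points may have $\rho$-distance zero. The convention above (interpreting $\mathcal{A}_{n(s,t)}(s)$ as the intersection of all blocks containing $s$) handles this cleanly, and the three verifications above carry over without modification. Beyond this bookkeeping, the argument is routine once a near-optimal admissible sequence has been fixed.
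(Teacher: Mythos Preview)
Your proposal is correct and follows essentially the same construction as the paper: choose a near-optimal admissible sequence, define $\widehat{\rho}(s,t)$ as the $\rho$-diameter of the finest common block $\mathcal{A}_{n(s,t)}(s)$, and verify the ultrametric inequality, domination, and the diameter comparison $\mathrm{diam}(\mathcal{A}_n(t),\widehat{\rho})\leq\mathrm{diam}(\mathcal{A}_n(t),\rho)$ exactly as you outline. The only cosmetic difference is that the paper handles the degenerate case $n(s,t)=\infty$ by introducing the limiting partition $\mathcal{A}_\infty$ generated by all the $\mathcal{A}_n$, whereas you use the equivalent convention $\mathcal{A}_{n(s,t)}(s)=\bigcap_n\mathcal{A}_n(s)$ with diameter $\inf_n\mathrm{diam}(\mathcal{A}_n(s),\rho)$; both conventions make the three verifications go through unchanged.
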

\begin{proof}
By the definition of $\gamma_{2}(T,\rho)$, there is an admissible sequence $\mathcal{A}_{0},\mathcal{A}_{1},\ldots$ of $T$ such that
\begin{equation}\label{gamma2almost}
\sup_{t\in T}\sum_{n\geq 0}2^{\frac{n}{2}}\mathrm{diam}(\mathcal{A}_{n}(t),\rho)\leq(1+\delta)\gamma_{2}(T,\rho).
\end{equation}
Let $\mathcal{A}_{\infty}$ be the partition generated by all the $\mathcal{A}_{0},\mathcal{A}_{1},\ldots$.

For $t,s\in T$, define 
\[n(t,s)=\sup\{n\in\mathbb{N}\cup\{0\}:\,t,s\text{ are in the same block of the partition }\mathcal{A}_{n}\}.\]

{\bf Claim 1:} $t,s\in T$ are always in the same block of the partition $\mathcal{A}_{n(t,s)}$.

The claim is trivial when $n(t,s)<\infty$. If $n(t,s)=\infty$ then $t,s$ are in the same block of $\mathcal{A}_{n}$, for all $n\geq 0$, and so $t,s\in\mathcal{A}_{\infty}$ by the definition of $\mathcal{A}_{\infty}$.

{\bf Claim 2:} If $t_{1},t_{2},t_{3}\in T$ then $n(t_{1},t_{3})\geq\min(n(t_{1},t_{2}),n(t_{2},t_{3}))$.

By Claim 1, we have that $t_{1},t_{2}$ are in the same block of the partition $\mathcal{A}_{n(t_{1},t_{2})}$ and thus in the same block of the partition $\mathcal{A}_{\min(n(t_{1},t_{2}),n(t_{2},t_{3}))}$. Similarly, $t_{2},t_{3}$ are in the same block of the partition $\mathcal{A}_{n(t_{2},t_{3})}$ and thus in the same block of $\mathcal{A}_{\min(n(t_{1},t_{2}),n(t_{2},t_{3}))}$. Therefore, $t_{1},t_{2},t_{3}$ are in the same block of $\mathcal{A}_{\min(n(t_{1},t_{2}),n(t_{2},t_{3}))}$. Hence by the definition of $n(t_{1},t_{3})$, we have $n(t_{1},t_{3})\geq\min(n(t_{1},t_{2}),n(t_{2},t_{3}))$.

Define the ultrametric $\widehat{\rho}$ on $T$ as follows:
\[\widehat{\rho}(t,s)=\begin{cases}\mathrm{diam}(\mathcal{A}_{n(t,s)}(t),\rho),&t\neq s\\0,&t=s\end{cases},\]
for $t,s\in T$. Note that since $t,s$ are in the same block of the partition $\mathcal{A}_{n(t,s)}$ (by Claim 1), we have $\mathcal{A}_{n(t,s)}(t)=\mathcal{A}_{n(t,s)}(s)$ and so $\widehat{\rho}(t,s)=\widehat{\rho}(s,t)$ for all $t,s\in T$.

Let us check that $\widehat{\rho}$ is indeed an ultrametric. Fix distinct $t_{1},t_{2},t_{3}\in T$. If $n(t_{1},t_{2})\leq n(t_{2},t_{3})$, then Claim 2 gives $n(t_{1},t_{3})\geq n(t_{1},t_{2})$, and so $\mathcal{A}_{n(t_{1},t_{3})}(t_{1})\subset\mathcal{A}_{n(t_{1},t_{2})}(t_{1})$, which implies that $\widehat{\rho}(t_{1},t_{3})\leq\widehat{\rho}(t_{1},t_{2})$. Similarly, if $n(t_{1},t_{2})\geq n(t_{2},t_{3})$, then Claim 2 gives $n(t_{1},t_{3})\geq n(t_{2},t_{3})$, and so $\mathcal{A}_{n(t_{1},t_{3})}(t_{3})\subset\mathcal{A}_{n(t_{2},t_{3})}(t_{3})$, which implies that $\widehat{\rho}(t_{3},t_{1})\leq\widehat{\rho}(t_{3},t_{2})$. In both cases, we have $\widehat{\rho}(t_{1},t_{3})\leq\max(\widehat{\rho}(t_{1},t_{2}),\widehat{\rho}(t_{2},t_{3}))$ for all distinct $t_{1},t_{2},t_{3}\in T$. If any two of $t_{1},t_{2},t_{3}$ coincide, then we also have $\widehat{\rho}(t_{1},t_{3})\leq\max(\widehat{\rho}(t_{1},t_{2}),\widehat{\rho}(t_{2},t_{3}))$. Therefore, $\widehat{\rho}$ is an ultrametric on $T$.

Fix $t\neq s$ in $T$. By Claim 1, we have that $t,s$ are in the same block of the partition $\mathcal{A}_{n(t,s)}$ and so $\rho(t,s)\leq\mathrm{diam}(\mathcal{A}_{n(t,s)}(t),\rho)=\widehat{\rho}(t,s)$. If $t=s$ then $\rho(t,s)=0=\widehat{\rho}(t,s)$. Therefore, $\rho(t,s)\leq\widehat{\rho}(t,s)$ for all $t,s\in T$.

Finally, for every $j\geq 0$ and every block $A\in\mathcal{A}_{j}$, we have $n(t,s)\geq j$ for all $t,s\in A$, and so $\widehat{\rho}(t,s)\leq\mathrm{diam}(\mathcal{A}_{j}(t),\rho)=\mathrm{diam}(A,\rho)$ for all $t,s\in A$. Hence $\mathrm{diam}(A,\widehat{\rho})\leq\mathrm{diam}(A,\rho)$ for all $A\in\mathcal{A}_{j}$ and $j\geq 0$. So
\[\gamma_{2}(T,\widehat{\rho})\leq\sup_{t\in T}\sum_{n\geq 0}2^{\frac{n}{2}}\mathrm{diam}(\mathcal{A}_{n}(t),\widehat{\rho})\leq\sup_{t\in T}\sum_{n\geq 0}2^{\frac{n}{2}}\mathrm{diam}(\mathcal{A}_{n}(t),\rho).\]
By (\ref{gamma2almost}), the result follows.
\end{proof}
\begin{lemma}\label{sfplus}
Let $\rho$ be a metric on a set $T$. Let $(Z_{t})_{t\in T}$ and $(W_{t})_{t\in T}$ be two Gaussian processes on $T$ such that $\mathbb{E}Z_{t}=\mathbb{E}W_{t}=0$, for all $t\in T$, and
\[\mathbb{E}(Z_{t}-Z_{s})^{2}\leq\mathbb{E}(W_{t}-W_{s})^{2}+\rho(t,s)^{2},\]
for all $t,s\in T$. Then
\[\mathbb{E}\sup_{t\in T}Z_{t}\leq \mathbb{E}\sup_{t\in T}W_{t}+C\gamma_{2}(T,\rho),\footnote{To avoid the measurability issue, for a Gaussian process $(Z_{t})_{t\in T}$, we define $\mathbb{E}\sup_{t\in T}Z_{t}:=\sup_{F}\mathbb{E}\sup_{t\in F}Z_{t}$,
where the supremum is over all finite subset $F\subset T$. See \cite[(2.2)]{Talagrandul}.}\]
where $C\geq 1$ is a universal constant.
\end{lemma}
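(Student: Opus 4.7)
The plan is to reduce Lemma \ref{sfplus} to the classical Slepian--Fernique inequality by realizing the auxiliary metric $\rho$ (up to a constant factor and a small loss) as the $L^{2}$ metric of an independent centered Gaussian process whose expected supremum is bounded by $\gamma_{2}(T,\rho)$. By the footnote convention we may assume throughout that $T$ is finite, and we may assume $\gamma_{2}(T,\rho)<\infty$ since otherwise the bound is vacuous.

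The first step is to apply Lemma \ref{ultralemma} with $\delta=1$ to obtain an ultrametric $\widehat{\rho}$ on $T$ with $\rho(t,s)\leq\widehat{\rho}(t,s)$ for all $t,s\in T$ and $\gamma_{2}(T,\widehat{\rho})\leq 2\gamma_{2}(T,\rho)$, together with the underlying admissible sequence $\mathcal{A}_{0},\mathcal{A}_{1},\ldots$ from its proof (so that $\widehat{\rho}(t,s)=\mathrm{diam}(\mathcal{A}_{n(t,s)}(t),\rho)$). The second step is to construct a centered Gaussian process $(Y_{t})_{t\in T}$ that realizes $\widehat{\rho}$ as its $L^{2}$ metric up to a factor $\sqrt{2}$. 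On a probability space independent of $(Z_{t})$ and $(W_{t})$, let $\{g_{B}:n\geq 1,\,B\in\mathcal{A}_{n}\}$ be independent standard Gaussians and set
\[Y_{t}:=\sum_{n\geq 1}\sqrt{\mathrm{diam}(\mathcal{A}_{n-1}(t),\rho)^{2}-\mathrm{diam}(\mathcal{A}_{n}(t),\rho)^{2}}\;g_{\mathcal{A}_{n}(t)}.\]
The radicands are nonnegative because the admissible sequence is increasing. For distinct $t,s\in T$ with $N:=n(t,s)$, the terms with $n\leq N$ cancel (since both the coefficient and the Gaussian depend only on $\mathcal{A}_{n-1}(\cdot),\mathcal{A}_{n}(\cdot)$, which agree for $t$ and $s$ at these levels), while for $n>N$ the blocks $\mathcal{A}_{n}(t)$ and $\mathcal{A}_{n}(s)$ are disjoint and the corresponding Gaussians are independent. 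Using the finiteness of $T$ to ensure $\mathrm{diam}(\mathcal{A}_{n}(t),\rho)\to 0$, the two telescoping sums give
\[\mathbb{E}(Y_{t}-Y_{s})^{2}=\mathrm{diam}(\mathcal{A}_{N}(t),\rho)^{2}+\mathrm{diam}(\mathcal{A}_{N}(s),\rho)^{2}=2\widehat{\rho}(t,s)^{2}.\]
Setting $\widetilde{Y}_{t}:=Y_{t}/\sqrt{2}$ then yields $\mathbb{E}(\widetilde{Y}_{t}-\widetilde{Y}_{s})^{2}=\widehat{\rho}(t,s)^{2}\geq\rho(t,s)^{2}$.

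The third step combines $W$ with the independent process $\widetilde{Y}$: the centered Gaussian process $U_{t}:=W_{t}+\widetilde{Y}_{t}$ satisfies
\[\mathbb{E}(U_{t}-U_{s})^{2}=\mathbb{E}(W_{t}-W_{s})^{2}+\widehat{\rho}(t,s)^{2}\geq\mathbb{E}(Z_{t}-Z_{s})^{2},\]
so Slepian--Fernique gives $\mathbb{E}\sup_{t}Z_{t}\leq\mathbb{E}\sup_{t}U_{t}$. The pointwise inequality $\sup_{t}(a_{t}+b_{t})\leq\sup_{t}a_{t}+\sup_{t}b_{t}$, taken in expectation, gives $\mathbb{E}\sup_{t}U_{t}\leq\mathbb{E}\sup_{t}W_{t}+\mathbb{E}\sup_{t}\widetilde{Y}_{t}$. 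The final step invokes the generic chaining upper bound for Gaussian processes (Talagrand's majorizing measures theorem) applied to $\widetilde{Y}$, whose natural metric is $\widehat{\rho}$: this yields $\mathbb{E}\sup_{t}\widetilde{Y}_{t}\leq C'\gamma_{2}(T,\widehat{\rho})\leq 2C'\gamma_{2}(T,\rho)$, and relabeling $C:=2C'$ completes the proof.

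The one delicate point is the construction of $Y$ in the second step: one must carefully check the telescoping identity for $\mathbb{E}(Y_{t}-Y_{s})^{2}$, verifying that both the coefficient and the label $\mathcal{A}_{n}(t)$ appearing in the $n$\,th summand of $Y_{t}$ depend only on the block $\mathcal{A}_{n}(t)$ (using that $\mathcal{A}_{n-1}$ is coarser than $\mathcal{A}_{n}$), so that terms with $n\leq N$ truly cancel while terms with $n>N$ involve genuinely independent Gaussians. All other ingredients are standard.
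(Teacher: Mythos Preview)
Your argument is correct and follows the same overall strategy as the paper: reduce to an ultrametric $\widehat{\rho}$ via Lemma~\ref{ultralemma}, realize $\widehat{\rho}$ as the $L^{2}$ metric of a centered Gaussian process independent of $W$, apply Slepian--Fernique to $Z$ versus $W$ plus this auxiliary process, and control the auxiliary supremum by $C\gamma_{2}(T,\widehat{\rho})$ via the generic chaining upper bound. The only difference lies in how the auxiliary process is produced. The paper simply invokes the classical fact that every finite ultrametric space embeds isometrically into a Hilbert space and takes the process to be $\langle g,\phi(t)\rangle$ for a standard Gaussian vector $g$ and an isometric embedding $\phi$; you instead build the process by hand from the admissible sequence underlying the proof of Lemma~\ref{ultralemma}, obtaining $L^{2}$ metric $\sqrt{2}\,\widehat{\rho}$ and then rescaling. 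Your construction is valid (the telescoping and independence structure work out, and $\mathrm{diam}(\mathcal{A}_{n}(t),\rho)\to 0$ because the chosen admissible sequence has finite $\gamma_{2}$-sum), but the paper's route is shorter and uses only the statement of Lemma~\ref{ultralemma} rather than reaching into its proof.
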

\begin{proof}
Without loss of generality, $T$ is finite. 
By Lemma \ref{ultralemma}, it suffices to prove the result assuming that $\rho$ is an ultrametric. It is well known that every finite ultrametric space embeds isometrically into a Hilbert space \cite{Bartal}. Thus, there exists $\phi:T\to\mathbb{R}^{|T|}$ such that $\|\phi(t)-\phi(s)\|_{2}=\rho(t,s)$ for all $t,s\in T$. Let $g$ be a random vector in $\mathbb{R}^{|T|}$ with independent standard Gaussian entries and such that $g$ is independent of $(W_{t})_{t\in T}$. Then
\begin{equation}\label{sfpluseq1}
\rho(t,s)^{2}=\|\phi(t)-\phi(s)\|_{2}^{2}=\mathbb{E}\langle g,\phi(t)-\phi(s)\rangle^{2},
\end{equation}
for all $t,s\in T$. So by assumption, we have
\begin{eqnarray*}
\mathbb{E}(Z_{t}-Z_{s})^{2}&\leq&\mathbb{E}(W_{t}-W_{s})^{2}+\mathbb{E}\langle g,\phi(t)-\phi(s)\rangle^{2}\\&=&
\mathbb{E}[W_{t}-W_{s}+\langle g,\phi(t)-\phi(s)\rangle]^{2},
\end{eqnarray*}
for all $t,s\in T$. Consider the Gaussian process $Y_{t}=W_{t}+\langle g,\phi(t)\rangle$ for $t\in T$. Since $\mathbb{E}(Z_{t}-Z_{s})^{2}\leq\mathbb{E}(Y_{t}-Y_{s})^{2}$ and $\mathbb{E}Z_{t}=\mathbb{E}Y_{t}=0$ for all $t,s\in T$, by the Slepian-Fernique inequality \cite[Theorem 7.2.11]{Romanbook}, we have
\[\mathbb{E}\sup_{t\in T}Z_{t}\leq\mathbb{E}\sup_{t\in T}Y_{t}\leq\mathbb{E}\sup_{t\in T}W_{t}+\mathbb{E}\sup_{t\in T}\langle g,\phi(t)\rangle\leq\mathbb{E}\sup_{t\in T}W_{t}+C\gamma_{2}(T,\rho),\]
where the last inequality follows from (\ref{sfpluseq1}) and the generic chaining bound \cite[Theorem 2.7.2]{Talagrandul}.
\end{proof}
\begin{remark}
If we allow a constant factor on the term $\mathbb{E}\sup_{t\in T}W_{t}$ in Lemma \ref{sfplus}, the result can be easily proved using Talagrand's majorizing measure theorem.
\end{remark}
The following result follows immediately from Lemma \ref{sfplus} and the fact that $\gamma_{2}(T,\rho)\leq C\int_{0}^{\infty}\sqrt{\ln N(T,\rho,\epsilon)}\,d\epsilon$ for some universal constant $C\geq 1$ \cite[(2.40)]{Talagrandul}.
\begin{lemma}\label{sfpluscorollary}
Let $\rho$ be a metric on a set $T$. Let $(Z_{t})_{t\in T}$ and $(W_{t})_{t\in T}$ be two Gaussian processes on $T$ such that $\mathbb{E}Z_{t}=\mathbb{E}W_{t}=0$, for all $t\in T$, and
\[\mathbb{E}(Z_{t}-Z_{s})^{2}\leq\mathbb{E}(W_{t}-W_{s})^{2}+\rho(t,s)^{2},\]
for all $t,s\in T$. Then
\[\mathbb{E}\sup_{t\in T}Z_{t}\leq \mathbb{E}\sup_{t\in T}W_{t}+C\int_{0}^{\infty}\sqrt{\ln N(T,\rho,\epsilon)}\,d\epsilon,\]
where $C\geq 1$ is a universal constant.
\end{lemma}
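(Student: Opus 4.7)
The plan is to deduce this corollary as a direct combination of two ingredients that are already available at this point in the paper. First, Lemma~\ref{sfplus} immediately applies to the hypothesis: under the stated inequality $\mathbb{E}(Z_t-Z_s)^2 \leq \mathbb{E}(W_t-W_s)^2 + \rho(t,s)^2$ together with the centering $\mathbb{E}Z_t=\mathbb{E}W_t=0$, we obtain
\[
\mathbb{E}\sup_{t\in T} Z_t \;\leq\; \mathbb{E}\sup_{t\in T} W_t + C_1\,\gamma_2(T,\rho),
\]
for some universal $C_1 \geq 1$. So the only remaining task is to control $\gamma_2(T,\rho)$ by the Dudley entropy integral.

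For that second step, I would invoke the classical bound $\gamma_2(T,\rho) \leq C_2 \int_0^\infty \sqrt{\ln N(T,\rho,\epsilon)}\,d\epsilon$, which is exactly \cite[(2.40)]{Talagrandul} (this is Dudley's inequality in the generic chaining formulation, proved by choosing, for each $n\geq 0$, a partition of $T$ whose blocks are contained in $\rho$-balls of radius $2^{-n}\cdot\text{(diameter of }T\text{)}$ refined by an $\epsilon_n$-net with $\epsilon_n \sim 2^{-n/2}$-decay, and then observing that the resulting admissible sequence satisfies the required diameter bound). Since the statement of the corollary explicitly cites this as the input, I would simply quote it.

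Chaining the two ingredients gives
\[
\mathbb{E}\sup_{t\in T} Z_t \;\leq\; \mathbb{E}\sup_{t\in T} W_t + C_1 C_2 \int_0^\infty \sqrt{\ln N(T,\rho,\epsilon)}\,d\epsilon,
\]
and setting $C = C_1 C_2$ finishes the proof. There is essentially no obstacle here: all the real work was done in Lemma~\ref{ultralemma} and Lemma~\ref{sfplus}, and Dudley's bound on $\gamma_2$ is a standard black-box result. In effect, the corollary is just a reformulation of Lemma~\ref{sfplus} in a form that is more convenient for later applications, where we already have access (via Lemma~\ref{coveretak}) to covering-number estimates for the metrics $\eta^{(k)}$ but have not developed machinery to compute $\gamma_2$ directly.
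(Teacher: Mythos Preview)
Your proposal is correct and matches the paper's approach exactly: the paper states that the lemma follows immediately from Lemma~\ref{sfplus} together with the Dudley bound $\gamma_{2}(T,\rho)\leq C\int_{0}^{\infty}\sqrt{\ln N(T,\rho,\epsilon)}\,d\epsilon$ from \cite[(2.40)]{Talagrandul}, which is precisely the two-step argument you give.
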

\begin{proof}[Proof of Theorem \ref{main}]
{\bf Step 1: Gaussian process}\\
The injective norm of the random tensor $Z$ defined in Theorem \ref{main} is equal to
\[\|Z\|_{\mathrm{inj}}=\sup_{x_{1},\ldots,x_{r}\in B_{2}^{d}}\sum_{i_{1},\ldots,i_{r}\in[d]}b_{i_{1},\ldots,i_{r}}g_{i_{1},\ldots,i_{r}}\langle x_{1},e_{i_{1}}\rangle\ldots\langle x_{r},e_{i_{r}}\rangle.\]
Let $B_{2}^{d,r}:=\underbrace{B_{2}^{d}\times\ldots\times B_{2}^{d}}_{r}$. Define the Gaussian process $(Z_{(x_{1},\ldots,x_{r})})_{(x_{1},\ldots,x_{r})\in B_{2}^{d,r}}$ by
\[Z_{(x_{1},\ldots,x_{r})}:=\sum_{i_{1},\ldots,i_{r}\in[d]}b_{i_{1},\ldots,i_{r}}g_{i_{1},\ldots,i_{r}}\langle x_{1},e_{i_{1}}\rangle\ldots\langle x_{r},e_{i_{r}}\rangle,\]
for $(x_{1},\ldots,x_{r})\in B_{2}^{d,r}$. Then
\begin{equation}\label{proofmaineq1}
\mathbb{E}\|Z\|_{\mathrm{inj}}=\mathbb{E}\sup_{(x_{1},\ldots,x_{r})\in B_{2}^{d,r}}Z_{(x_{1},\ldots,x_{r})}.
\end{equation}

{\bf Step 2: Bound the induced metric}\\
In this step, we use the properties listed in Subsection \ref{summary} to bound the induced metric of the Gaussian process $(Z_{(x_{1},\ldots,x_{r})})_{(x_{1},\ldots,x_{r})\in B_{2}^{d,r}}$.

For all $(x_{1},\ldots,x_{r}),(y_{1},\ldots,y_{r})\in B_{2}^{d,r}$, we have
\begin{align*}
&(\mathbb{E}|Z_{(x_{1},\ldots,x_{r})}-Z_{(y_{1},\ldots,y_{r})}|^{2})^{\frac{1}{2}}\\=&
\left(\sum_{i_{1},\ldots,i_{r}\in[d]}b_{i_{1},\ldots,i_{r}}^{2}(\langle x_{1},e_{i_{1}}\rangle\ldots\langle x_{r},e_{i_{r}}\rangle-\langle y_{1},e_{i_{1}}\rangle\ldots\langle y_{r},e_{i_{r}}\rangle)^{2}\right)^{\frac{1}{2}}\\=&
\|\tau(x_{1},\ldots,x_{r})-\tau(y_{1},\ldots,y_{r})\|_{2}\quad\text{by }(\ref{staudef})\\=&
\left\|\sum_{k=1}^{r}\tau(y_{1},\ldots,y_{k-1},x_{k}-y_{k},x_{k+1},\ldots,x_{r})\right\|_{2}\\\le&
\sum_{k=1}^{r}\|\tau(y_{1},\ldots,y_{k-1},x_{k}-y_{k},x_{k+1},\ldots,x_{r})\|_{2}\\=&
\sum_{k=1}^{r}\|D_{y_{1},\ldots,y_{k-1},x_{k+1},\ldots,x_{r}}^{(k)}(x_{k}-y_{k})\|_{2}\quad\text{by }(\ref{staud})\\\leq&
\sum_{k=1}^{r}\|D_{x_{1},\ldots,x_{k-1},x_{k+1},\ldots,x_{r}}^{(k)}x_{k}-D_{y_{1},\ldots,y_{k-1},y_{k+1},\ldots,y_{r}}^{(k)}y_{k}\|_{2}\\&
+\sum_{k=1}^{r}
\|D_{x_{1},\ldots,x_{k-1},x_{k+1},\ldots,x_{r}}^{(k)}-D_{y_{1},\ldots,y_{k-1},x_{k+1},\ldots,x_{r}}^{(k)}\|_{\infty}\\&+
\sum_{k-1}^{r}\|D_{y_{1},\ldots,y_{k-1},y_{k+1},\ldots,y_{r}}^{(k)}-D_{y_{1},\ldots,y_{k-1},x_{k+1},\ldots,x_{r}}^{(k)}\|_{\infty},
\end{align*}
where the last inequality follows from the triangle inequality and the facts that $\|x_{k}\|_{2},\|y_{k}\|_{2}\leq 1$ and all the $D$'s are diagonal matrices. Define the metric $\eta$ on $B_{2}^{d,r}$ by
\begin{equation}\label{etadef}
\eta((x_{1},\ldots,x_{r}),(y_{1},\ldots,y_{r})):=\eta^{(1)}(x_{1},y_{1})+\ldots+\eta^{(r)}(x_{r},y_{r}),
\end{equation}
for $(x_{1},\ldots,x_{r}),(y_{1},\ldots,y_{r})\in B_{2}^{d,r}$. Then by (\ref{sdeta}), it follows that
\begin{align*}
&(\mathbb{E}|Z_{(x_{1},\ldots,x_{r})}-Z_{(y_{1},\ldots,y_{r})}|^{2})^{\frac{1}{2}}\\=&
\sum_{k=1}^{r}\|D_{x_{1},\ldots,x_{k-1},x_{k+1},\ldots,x_{r}}^{(k)}x_{k}-D_{y_{1},\ldots,y_{k-1},y_{k+1},\ldots,y_{r}}^{(k)}y_{k}\|_{2}\\&+
\sum_{k=1}^{r}(\eta^{(1)}(x_{1},y_{1})+\ldots+\eta^{(k-1)}(x_{k-1},y_{k-1}))\\&+
\sum_{k=1}^{r}(\eta^{(k+1)}(y_{k+1},x_{k+1})+\ldots+\eta^{(r)}(y_{r},x_{r}))\\\leq&
\sum_{k=1}^{r}\|D_{x_{1},\ldots,x_{k-1},x_{k+1},\ldots,x_{r}}^{(k)}x_{k}-D_{y_{1},\ldots,y_{k-1},y_{k+1},\ldots,y_{r}}^{(k)}y_{k}\|_{2}\\&
+r\cdot\eta((x_{1},\ldots,x_{r}),(y_{1},\ldots,y_{r})).
\end{align*}
So taking square and applying the inequality $(a+b)^{2}\leq 2a^{2}+2b^{2}$, we obtain
\begin{align*}
&\mathbb{E}(Z_{(x_{1},\ldots,x_{r})}-Z_{(y_{1},\ldots,y_{r})})^{2}\\\leq&
2r\sum_{k=1}^{r}\|D_{x_{1},\ldots,x_{k-1},x_{k+1},\ldots,x_{r}}^{(k)}x_{k}-D_{y_{1},\ldots,y_{k-1},y_{k+1},\ldots,y_{r}}^{(k)}y_{k}\|_{2}^{2}\\&
+2r^{2}\cdot\eta((x_{1},\ldots,x_{r}),(y_{1},\ldots,y_{r}))^{2}.
\end{align*}

{\bf Step 3: Apply Lemma \ref{sfpluscorollary}}\\
Let $g^{(1)},\ldots,g^{(r)}$ be independent random vectors in $\mathbb{R}^{d}$ and such that each $g^{(k)}$ has independent standard Gaussian entries. Define the following Gaussian process on $B_{2}^{d,r}$
\[W_{(x_{1},\ldots,x_{r})}:=\sum_{k=1}^{r}\langle g^{(k)},D_{x_{1},\ldots,x_{k-1},x_{k+1},\ldots,x_{r}}^{(k)}x_{k}\rangle,\]
for $(x_{1},\ldots,x_{r})\in B_{2}^{d,r}$. Then
\begin{align*}
&\mathbb{E}(Z_{(x_{1},\ldots,x_{r})}-Z_{(y_{1},\ldots,y_{r})})^{2}\\\leq&
2r\mathbb{E}(W_{(x_{1},\ldots,x_{r})}-W_{(y_{1},\ldots,y_{r})})^{2}
+2r^{2}\cdot\eta((x_{1},\ldots,x_{r}),(y_{1},\ldots,y_{r}))^{2}.
\end{align*}
So by Lemma \ref{sfpluscorollary},
\begin{eqnarray*}
\mathbb{E}\sup_{(x_{1},\ldots,x_{r})\in B_{2}^{d,r}}Z_{(x_{1},\ldots,x_{r})}&\leq&\mathbb{E}\sup_{(x_{1},\ldots,x_{r})\in B_{2}^{d,r}}\sqrt{2r}W_{(x_{1},\ldots,x_{r})}\\&&
+C\int_{0}^{\infty}\sqrt{\ln N(B_{2}^{d,r},\sqrt{2r^{2}}\cdot\eta,\epsilon)}\,d\epsilon\\&=&
\sqrt{2r}\cdot\mathbb{E}\sup_{x_{1},\ldots,x_{r}\in B_{2}^{d}}\sum_{k=1}^{r}\langle g^{(k)},D_{x_{1},\ldots,x_{k-1},x_{k+1},\ldots,x_{r}}^{(k)}x_{k}\rangle\\&&
+C\sqrt{2r^{2}}\int_{0}^{\infty}\sqrt{\ln N(B_{2}^{d,r},\eta,\epsilon)}\,d\epsilon.
\end{eqnarray*}
Thus, by (\ref{proofmaineq1}) and applying `$\sup\sum\leq\sum\sup$," we have
\begin{eqnarray}\label{proofmaineq2}
\mathbb{E}\|Z\|_{\mathrm{inj}}&\leq&
\sqrt{2r}\sum_{k=1}^{r}\mathbb{E}\sup_{x_{1},\ldots,x_{r}\in B_{2}^{d}}\langle g,D_{x_{1},\ldots,x_{k-1},x_{k+1},\ldots,x_{r}}^{(k)}x_{k}\rangle\\&&
+Cr\sqrt{2}\int_{0}^{\infty}\sqrt{\ln N(B_{2}^{d,r},\eta,\epsilon)}\,d\epsilon,\nonumber
\end{eqnarray}
where $g$ is a random vector in $\mathbb{R}^{d}$ with independent standard Gaussian entries.

{\bf Step 4: Bound the two terms}\\
In this final step, we bound the two terms in the upper bound for $\mathbb{E}\|Z\|_{\mathrm{inj}}$ in (\ref{proofmaineq2}). For the first term, for all $x_{1},\ldots,x_{r}\in B_{2}^{d}$,
\begin{eqnarray*}
\langle g,D_{x_{1},\ldots,x_{k-1},x_{k+1},\ldots,x_{r}}^{(k)}x_{k}\rangle&\leq&
\|D_{x_{1},\ldots,x_{k-1},x_{k+1},\ldots,x_{r}}^{(k)}g\|_{2}\\&=&
\|\tau(x_{1},\ldots,x_{k-1},g,x_{k+1},\ldots,x_{r})\|_{2}\quad\text{by }(\ref{staud})\\&=&
\left(\sum_{i_{1},\ldots,i_{r}\in[d]}b_{i_{1},\ldots,i_{r}}^{2}\langle g,e_{i_{k}}\rangle^{2}\prod_{j\in[r]\backslash\{k\}}\langle x_{j},e_{i_{j}}\rangle^{2}\right)^{\frac{1}{2}}\quad\text{by }(\ref{staudef}),
\end{eqnarray*}
and so
\begin{equation}\label{proofmaineq3}
\sup_{x_{1},\ldots,x_{r}\in B_{2}^{d}}\langle g,D_{x_{1},\ldots,x_{k-1},x_{k+1},\ldots,x_{r}}^{(k)}x_{k}\rangle\leq\max_{i_{1},\ldots,i_{k-1},i_{k+1},\ldots,i_{r}\in[d]}\left(\sum_{i_{k}\in [d]}b_{i_{1},\ldots,i_{r}}^{2}\langle g,e_{i_{k}}\rangle^{2}\right)^{\frac{1}{2}}.
\end{equation}
Let $b=\max_{i_{1},\ldots,i_{r}\in[d]}|b_{i_{1},\ldots,i_{r}}|$. By Gaussian concentration \cite[Equation (2.35)]{Ledoux}, for all $t\geq 0$ and $i_{1},\ldots,i_{k-1},i_{k+1},\ldots,i_{r}\in[d]$, with probability at least $1-e^{-t^{2}/(2b^{2})}$, we have
\[\left(\sum_{i_{k}\in [d]}b_{i_{1},\ldots,i_{r}}^{2}\langle g,e_{i_{k}}\rangle^{2}\right)^{\frac{1}{2}}\leq
\mathbb{E}\left(\sum_{i_{k}\in [d]}b_{i_{1},\ldots,i_{r}}^{2}\langle g,e_{i_{k}}\rangle^{2}\right)^{\frac{1}{2}}+t\leq
\left(\sum_{i_{k}\in [d]}b_{i_{1},\ldots,i_{r}}^{2}\right)^{\frac{1}{2}}+t.\]
So by a union bound, for every $t\geq 0$, with probability at least $1-d^{r-1}e^{-t^{2}/(2b^{2})}$, we have
\[\max_{i_{1},\ldots,i_{k-1},i_{k+1},\ldots,i_{r}\in[d]}\left(\sum_{i_{k}\in [d]}b_{i_{1},\ldots,i_{r}}^{2}\langle g,e_{i_{k}}\rangle^{2}\right)^{\frac{1}{2}}\leq\max_{i_{1},\ldots,i_{k-1},i_{k+1},\ldots,i_{r}\in[d]}\mathbb{E}\left(\sum_{i_{k}\in [d]}b_{i_{1},\ldots,i_{r}}^{2}\right)^{\frac{1}{2}}+t.\]
Thus,
\begin{align*}
&\mathbb{E}\max_{i_{1},\ldots,i_{k-1},i_{k+1},\ldots,i_{r}\in[d]}\left(\sum_{i_{k}\in [d]}b_{i_{1},\ldots,i_{r}}^{2}\langle g,e_{i_{k}}\rangle^{2}\right)^{\frac{1}{2}}\\\leq&
\max_{i_{1},\ldots,i_{k-1},i_{k+1},\ldots,i_{r}\in[d]}\mathbb{E}\left(\sum_{i_{k}\in [d]}b_{i_{1},\ldots,i_{r}}^{2}\right)^{\frac{1}{2}}+\int_{0}^{\infty}\min(d^{r-1}e^{-t^{2}/(2b^{2})},1)\,dt\\\leq&
\max_{i_{1},\ldots,i_{k-1},i_{k+1},\ldots,i_{r}\in[d]}\mathbb{E}\left(\sum_{i_{k}\in [d]}b_{i_{1},\ldots,i_{r}}^{2}\right)^{\frac{1}{2}}+Cb\sqrt{r\ln d}.
\end{align*}
So by (\ref{proofmaineq3}), we have
\begin{align}\label{proofmaineq4}
&\mathbb{E}\sup_{x_{1},\ldots,x_{r}\in B_{2}^{d}}\langle g,D_{x_{1},\ldots,x_{k-1},x_{k+1},\ldots,x_{r}}^{(k)}x_{k}\rangle\\\leq&
\max_{i_{1},\ldots,i_{k-1},i_{k+1},\ldots,i_{r}\in[d]}\mathbb{E}\left(\sum_{i_{k}\in [d]}b_{i_{1},\ldots,i_{r}}^{2}\right)^{\frac{1}{2}}+C_{1}b\sqrt{r\ln d},\nonumber
\end{align}
for some universal constant $C_{1}\geq 1$.

We now bound the second term in the upper bound in (\ref{proofmaineq2}). By the definition of $\eta$ in (\ref{etadef}), the covering number
\[N(B_{2}^{d,r},\eta,\epsilon)\leq \prod_{k=1}^{r}N\left(B_{2}^{d},\eta^{(k)},\frac{\epsilon}{r}\right).\]
So
\begin{eqnarray*}
\int_{0}^{\infty}\sqrt{\ln N(B_{2}^{d,r},\eta,\epsilon)}\,d\epsilon&\leq&\int_{0}^{\infty}\sqrt{r}\max_{k\in[r]}\sqrt{\ln N\left(B_{2}^{d},\eta^{(k)},\frac{\epsilon}{r}\right)}\,d\epsilon\\&=&
r\sqrt{r}\int_{0}^{\infty}\max_{k\in[r]}\sqrt{\ln N(B_{2}^{d},\eta^{(k)},\epsilon)}\,d\epsilon\leq C_{2}r^{2}(\ln d)^{2}b,
\end{eqnarray*}
for some universal constant $C_{2}\geq 1$, where the last inequality follows from (\ref{setadudley}). Therefore, by (\ref{proofmaineq2}) and (\ref{proofmaineq4}), we have
\begin{eqnarray*}
\mathbb{E}\|Z\|_{\mathrm{inj}}&\leq&\sqrt{2r}\sum_{k=1}^{r}\max_{i_{1},\ldots,i_{k-1},i_{k+1},\ldots,i_{r}\in[d]}\mathbb{E}\left(\sum_{i_{k}\in[d]}b_{i_{1},\ldots,i_{r}}^{2}\right)^{\frac{1}{2}}\\&&
+\sqrt{2}C_{1}br^{2}\sqrt{\ln d}+\sqrt{2}CC_{2}r^{3}(\ln d)^{2}b.
\end{eqnarray*}
Theorem \ref{main} is proved.
\end{proof}

{\bf Acknowledgement:} The author is grateful to Ramon van Handel for some very useful suggestions and to Yizhe Zhu for pointing out Corollary \ref{hypergraph}.

\end{document}